\definecolor{hillencolor}{rgb}{0.66, 0.030, 0.30}
\newtheorem{theorem}{Theorem}%  
\newtheorem{lemma}[theorem]{Lemma}%
\newtheorem{definition}{Definition}%
\newproof{Proof}{proof}
\begin{document}
	
	\begin{frontmatter}
		
		%% Title, authors and addresses
		
		%% use the tnoteref command within \title for footnotes;
		%% use the tnotetext command for theassociated footnote;
		%% use the fnref command within \author or \address for footnotes;
		%% use the fntext command for theassociated footnote;
		%% use the corref command within \author for corresponding author footnotes;
		%% use the cortext command for theassociated footnote;
		%% use the ead command for the email address,
		%% and the form \ead[url] for the home page:
		%% \title{Title\tnoteref{label1}}
		%% \tnotetext[label1]{}
		%% \author{Name\corref{cor1}\fnref{label2}}
		%% \ead{email address}
		%% \ead[url]{home page}
		%% \fntext[label2]{}
		%% \cortext[cor1]{}
		%% \affiliation{organization={},
			%%             addressline={},
			%%             city={},
			%%             postcode={},
			%%             state={},
			%%             country={}}
		%% \fntext[label3]{}
		
		\title{Positivity and global existence for nonlocal advection-diffusion models of interacting populations}
		
		%% use optional labels to link authors explicitly to addresses:
		%% \author[label1,label2]{}
		%% \affiliation[label1]{organization={},
			%%             addressline={},
			%%             city={},
			%%             postcode={},
			%%             state={},
			%%             country={}}
		%%
		%% \affiliation[label2]{organization={},
			%%             addressline={},
			%%             city={},
			%%             postcode={},
			%%             state={},
			%%             country={}}
		
		\author{Valeria Giunta\corref{cor1}\fnref{label1}}
		\cortext[cor1]{Corresponding author}
		\author[label2]{Thomas Hillen}
		\author[label3]{Mark A. Lewis}
		\author[label4]{Jonathan R. Potts}
		\affiliation[label1]{School of Mathematics and Computer Science, University of Swansea, Computational Foundry, Crymlyn Burrows, Skewen, Swansea SA1 8DD, UK}
		\affiliation[label2]{Department of Mathematical and Statistical Sciences, University of Alberta, Edmonton,
			AB T6G 2G1, Canada}
		\affiliation[label3]{Department of Mathematics and Statistics and Department of Biology, University of Victoria,
			PO Box 1700 Station CSC, Victoria, BC, Canada} 
		\affiliation[label4]{School of Mathematics and Statistics, University of Sheffield, Hicks Building, Hounsfield Road,
			Sheffield S3 7RH, UK}
		
		\begin{abstract}
			In this paper we study a broad class of non-local advection-diffusion models describing the behaviour of an arbitrary number of interacting species, each moving in response to the non-local presence of others. Our model allows for different non-local interaction kernels for each species and arbitrarily many spatial dimensions. 
			%The detection radius is embedded into a kernel that may take different forms for each species.
			%Assuming an arbitrarily large number of species and that each of them can have a different kernel,
			We prove the global existence of both non-negative weak solutions in any spatial dimension and positive classical solutions in one spatial dimension. These results generalise and unify various existing results regarding existence of non-local advection-diffusion equations. We also prove that solutions can blow up in finite time when the detection radius becomes zero, i.e. when the system is local, thus showing that nonlocality is essential for the global existence of solutions. We verify our results with some numerical simulations on 2D spatial domains. 
			
		\end{abstract}
		
		%%Graphical abstract
		%\begin{graphicalabstract}
		%\includegraphics{grabs}
		%\end{graphicalabstract}
		
		%%Research highlights
		% \begin{highlights}
			% \item Research highlight 1
			% \item Research highlight 2
			% \end{highlights}
		
		\begin{keyword} Nonlocal advection; global existence; blow-up.
			%% keywords here, in the form: keyword \sep keyword
			
			%% PACS codes here, in the form: \PACS code \sep code
			
			%% MSC codes here, in the form: \MSC code \sep code
			%% or \MSC[2008] code \sep code (2000 is the default)
			
		\end{keyword}
		
	\end{frontmatter}
	
	%% \linenumbers
	
	%% main text
\section*{Introduction}

\noindent We consider a multispecies model of interacting species, which sense their environment and other species in a nonlocal way \cite{fetecau2011,PL19,PPH}. The individual populations are denoted by $u_i(x,t)$, where $t\geq 0$ denotes time, $x\in \Omega$ denotes space and the index $i=1,\dots,N$ denotes the species.  The model is given by 
\begin{equation}\label{eq:model}
	u_{it} = D_i \Delta u_i +\nabla\cdot\left( u_i \sum_{j=1}^N \gamma_{ij} \nabla (K_{ij} \ast u_j) \right), \qquad i=1,\dots,N. \end{equation}
Here, $K_{ij} $ is a twice-differentiable function,
%such that $K_{ij}( x ) \geq 0$ is a non-increasing function of $\lvert x \rvert$ {\color{red}[JRP: is this essential for the proof? Where is it used?]}
with $\nabla K_{ij} \in L^\infty$, and $K_{ij} \ast u_j$ denotes a convolution operator defined as 
\[ K_{ij}\ast u_j (x) = \int_\Omega K_{ij}(x-y) u_j(y) dy.\]
From a biological perspective, $K_{ij}$ describes the nonlocal sensing of species $j$ by species $i$. The constants $D_i>0$ are diffusion coefficients of species $i$ and the values of $\gamma_{ij}$ denote the extent to which species $i$ avoids (if $\gamma_{ij}>0$) or is attracted to (if $\gamma_{ij}<0$) species $j$. For the definition of the nonlocal term to make sense, the domain $\Omega$ cannot have boundaries.  Here we let $\Omega=\mathbb{T}^n$, the $n$-torus defined by identifying the boundaries of $[-L_1,L_1]\times\dots\times[-L_n,L_n]$ in a periodic fashion.

Nonlocal interaction models, such as (\ref{eq:model}), have become an important modelling tool in the mathematical modelling of biological species \cite{lewis2016mathematics,eftimie2018hyperbolic,burger2018sorting,buttenschon2021non,wangsalmaniw2022,PPH}. Organisms do not typically make movement decisions only based on the local information they have about prey, predator, or food sources. Rather, movement decisions are based on information gathered over a certain `perceptual radius' via sight, smell, sounds, or other means of sensing \cite{pottslewis2019}.  Currently, the mathematical analysis of organism movement based on nonlocal perception is at full swing \cite{painter2024biological}. Several authors consider models of the form (\ref{eq:model}) to study species aggregation, segregation, avoidance, home ranges, territories, mixing, and spatio-temporal patterns \cite{GVA06,eftimieetal2007,Carrillo-chapter,burger2018sorting,PL19,glimm2020b,Carrillo2020,carrillo2021phase,potts2022beyond,giunta2022detecting,jewell2023,PPH}. In many of these papers, the analysis starts with results on local and global existence and positivity. These results are generated through various methods, such as energy functionals \cite{giunta2022detecting,Carrillo2020,Jungel2022,PPH}, semigroup theory and fixed-point arguments \cite{buttenschon2021non}, or direct PDE-type estimates \cite{hillen2018}, all depending on the specific model at hand.  In \cite{Carrillo2020}, using methods from \cite{Chazelle2017}, the authors consider model (\ref{eq:model}) for one species with smooth interaction kernel and they show global existence of classical solutions, using energy-entropy methods. In \cite{Jungel2022} the assumption of smooth interaction kernels is relaxed, and assuming a detailed balance condition on the kernels, global existence for weak solutions is shown. Our previous work in \cite{giuntaetal2022a} proves existence of local solutions in any space dimension and global solutions in 1D for the case of equal interaction kernels, i.e. $K_{ij}=K$.

Here we combine these results into a unifying existence theory for nonlocal models of type (\ref{eq:model}). In contrast to previous models, we allow the interaction kernels $K_{ij}$ to vary from species to species, and we have no restriction on the space dimension. This is somewhat surprising, since global existence for local versions of our model do strongly depend on the spatial dimension  \cite{Gilbarg,Lieberman,Suzuki}. This behavior is similar to the well known chemotaxis model \cite{horstmann2004,UsersGuide}. Solutions to the standard chemotaxis model are globally bounded in 1-D, while they blow-up in higher dimensions, if the initial population is large enough in the $L^{n/2}$-norm, where $n$ denotes the space dimension \cite{horstmann2004}. Guided by these observations, we consider the local limit of (\ref{eq:model}) and we also find cases in $n \geq 2$ dimensions where finite time blow-up is possible.

The paper is organised as follows. In Section \ref{sec:1} we define a modified version of Equation (\ref{eq:model}) and prove some preliminary results. This modified model is then analysed in Section \ref{sec:mild_sol}, where we prove the local existence of mild solutions, and in Section \ref{sec:global_existence}, where we prove the global existence of positive solutions. We conclude our proof by showing that every positive solution of the modified model is also a solution of Equation (\ref{eq:model}). In Section \ref{sec:blowup} we show that in the corresponding local system the solutions can blow up in finite time. Section \ref{eq:num} concludes with numerical simulations showing that the solutions of the nonlocal problem, although they become steeper as the detection radius becomes smaller, still remain bounded.

\section{A modified version of our system}\label{sec:1}
\noindent To establish our existence results for Equation (\ref{eq:model}), our approach is first to prove existence and non-negativity of weak solutions to a slightly modified version of Equation (\ref{eq:model}). We then show that any solution of this modified system is also a solution of Equation (\ref{eq:model}).  The modified system is as follows
\begin{align}\label{eq:model2}
	u_{it} = D_i \Delta u_i +\nabla \cdot \left( h(u_i) \sum_{j=1}^N \gamma_{ij} \nabla(K_{ij} \ast u_j) \right), \qquad i=1,\dots,N,
\end{align}
where $h(u) = u$ if $u\geq 0$ and $h(u)=0$ if $u<0$. 
%\begin{align} 
%h(u) = \left\{\begin{array}{ll} u, & \mbox{ if } \qquad u\geq 0, \\ 0, & %\mbox{otherwise.} \end{array}\right. 
%\end{align}
Note that whenever $u_i\geq 0$, Equations (\ref{eq:model}) and (\ref{eq:model2}) are identical.  In Equation (\ref{eq:model2}), derivatives are understood weakly.  In particular, the weak derivative of $h(u)$ is $h'(u) = 1$ if $u>0$ and $h'(u)=0$ if $u<0$. 
%\begin{align} \label{hprime}
%h'(u) = \left\{\begin{array}{ll} 1, & \mbox{ if } \qquad u>0, \\ 0, & \mbox{ if } \qquad u<0. \end{array}\right. 
%\end{align}
We collect some basic properties of  $h(u)$ in the following Lemma.
\begin{lemma} \label{lem:hu1} For any $v \in  H^1({\mathbb T})$, we have $\|h(v)\|_{L^2}\leq \|v\|_{L^2}$, $\|\nabla h(v)\|_{L^2}\leq \|\nabla v\|_{L^2}$, and $\|h(v)\nabla v\|_{L^1}\leq \|v\nabla v\|_{L^1}$.  For any $v_1,v_2 \in L^2({\mathbb T})$, we have $\|h(v_1)-h(v_2)\|_{L^2}\leq \|v_1-v_2\|_{L^2}$.\end{lemma}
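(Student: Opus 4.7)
The plan is to reduce each of the four inequalities to a pointwise bound on $h$ (or its weak derivative), then integrate. Since $h(s)=\max(s,0)$, we immediately have the pointwise estimates $|h(s)|\le|s|$ and $|h(a)-h(b)|\le|a-b|$ for all real $s,a,b$ (the latter is the standard fact that the positive part is $1$-Lipschitz). Squaring and integrating these over $\mathbb{T}$ yields $\|h(v)\|_{L^2}\le\|v\|_{L^2}$ and $\|h(v_1)-h(v_2)\|_{L^2}\le\|v_1-v_2\|_{L^2}$ directly, settling the first and fourth claims.

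For the bound on $\nabla h(v)$, I would invoke the classical truncation lemma for Sobolev functions (e.g., Gilbarg--Trudinger Lemma 7.6, or Stampacchia's lemma), which states that if $v\in H^1(\mathbb{T})$ then $h(v)\in H^1(\mathbb{T})$ with weak derivative
\begin{equation*}
\nabla h(v)(x) = \chi_{\{v>0\}}(x)\,\nabla v(x) \quad \text{a.e.}
\end{equation*}
This is consistent with the formula $h'(u)=1$ for $u>0$ and $h'(u)=0$ for $u<0$ stated just before the lemma. Taking absolute values gives $|\nabla h(v)|\le|\nabla v|$ pointwise a.e., and integrating the square over $\mathbb{T}$ yields the second inequality.

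For the third inequality, I would combine the pointwise bound $|h(v)|\le|v|$ with the fact that $h(v)$ and $v$ have the same sign wherever $h(v)\ne 0$, giving the pointwise bound $|h(v)\nabla v|\le|v\nabla v|$ a.e.\ on $\mathbb{T}$; integrating yields the $L^1$ inequality.

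I don't expect any genuine obstacle here: the only non-elementary input is the truncation identity for the weak gradient of the positive part, which is standard. The main thing to be careful about is phrasing the proof so that $h(v)\nabla v$ and $\nabla h(v)$ are interpreted in the weak sense consistent with the convention announced after Equation (\ref{eq:model2}); once that is clarified, each estimate reduces to a one-line pointwise bound followed by integration.
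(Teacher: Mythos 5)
Your proof is correct and follows essentially the same route as the paper: each inequality is reduced to a pointwise bound and then integrated, with the standard truncation lemma supplying the weak chain rule $\nabla h(v)=\chi_{\{v>0\}}\nabla v$ that the paper simply asserts. The only cosmetic difference is the fourth inequality, where you invoke the $1$-Lipschitz property of the positive part directly, whereas the paper verifies the same pointwise bound by an explicit decomposition of $\mathbb{T}$ into the regions where $v_1$ and $v_2$ are positive or not.
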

\noindent{\bf Proof.}  The inequality $\|h(v)\|_{L^2}\leq \|v\|_{L^2}$ follows from the definitions of $h$ and the $L^2$-norm. The inequality $\|\nabla h(v)\|_{L^2}\leq \|\nabla v\|_{L^2}$ follows from the same definitions, and also that $\nabla h(v)=h'(v)\nabla v$. %, with $h'$ given in (\ref{hprime}). 
The inequality $\|h(v)\nabla v\|_{L^1}\leq \|v\nabla v\|_{L^1}$ follows from the definitions of $h$ and the $L^1$-norm. 

For the final inequality, we observe that 
\begin{align}
	\|h(v_1)-h(v_2)\|_{L^2}^2&=\int_{\mathbb T}(h(v_1(x))-h(v_2(x)))^2{\rm d}x\nonumber =\int_{S_1}(v_1(x)-v_2(x))^2{\rm d}x+\int_{S_2}v_1^2(x){\rm d}x+\int_{S_3}v_2^2(x){\rm d}x
\end{align}
where $S_1=\{ x \in {\mathbb T}:v_1(x)>0, v_2(x)>0\}$, $S_2=\{ x \in {\mathbb T}:v_1(x)> 0, v_2(x)\leq 0\}$, and $S_3=\{ x \in {\mathbb T}:v_1(x)\leq 0, v_2(x)> 0\}$.  Now, for $x \in S_2$, we have $v_2(x)\leq 0$ and $v_1(x)>0$ so $ v_1(x)\leq  v_1(x)-v_2(x)$, and then $v_1^2(x)\leq (v_1(x)-v_2(x))^2$. Similarly, for $x \in S_3$, we have $ v_2(x) \leq v_2(x)-v_1(x) $ and $v_2^2(x)\leq (v_2(x)-v_1(x))^2$.  Hence
\begin{align}
	&\int_{S_1}(v_1(x)-v_2(x))^2{\rm d}x+\int_{S_2}v_1^2(x){\rm d}x+\int_{S_3}v_2^2(x){\rm d}x \nonumber \leq \int_{S_1\cup S_2 \cup S_3}(v_1(x)-v_2(x))^2{\rm d}x \nonumber \leq \|v_1-v_2\|_{L^2}^2,
\end{align}
so that $\|h(v_1)-h(v_2)\|_{L^2}\leq \|v_1-v_2\|_{L^2}$. \qed

%\textcolor{red}{VG: Isn't the following lemma a direct consequence of the previous lemma?} \textcolor{hillencolor}{yes}

%\textcolor{red}{VG: Do we need to prove Lemma 2 then?}
%\begin{lemma} \label{lem:hu2}If $u\in H^1$ then $ h(u) \in H^1$.\end{lemma}
%\noindent{\bf Proof.} We define a mollifier $h_\ep(u)=ue^{-\ep/u}$ for $u>0$ and $h_\ep(u)=0$ for $u\leq 0$. Note that the derivative changes continuously from 0 to 1, and never exceeds 1. Hence we have $\|h'_\ep\|\leq 1$. Then for given $u\in H^1$, we have $\|\nabla h_\ep(u)\|_{L^2} \leq \|h'_\ep\|_{L^\infty} \|\nabla u\|_{L^2} \leq \|\nabla u\|_{L^2}$. For the $L^2$ norm we have $\|h_\ep (u) \|_{L^2} \leq \|u\|_{L^2}$, simply because the integration domain of $h_\ep (u(x))$ is smaller.

%Then $\{h_{1/n}(u)\}_{n \in {\mathbb N}}$ is a uniformly bounded sequence in $H^1$, and by the Rellich-Kondrachov Theorem it has a convergent subsequence in each $L^p$ for $p=1,2$ and in $L^{\frac{2p}{p-2}}$ for $p\geq 3$. The limit of this subsequence is $h(u)$, and since $H^1$ is a Banach space, and the $H^1$ norm is uniformly bounded, this limit is also in $H^1$. \qed

\section{Local existence of mild solutions}\label{sec:mild_sol}

\noindent We begin by proving local existence of mild solutions to Equation (\ref{eq:model2}).  
\begin{definition}
	Given $u_0=(u_{10},...,u_{N0})\in (L^2(\mathbb{T}^n ))^N$ and $T>0$, we say that 
	$u(x,t)=(u_{1}(x,t),...,u_{N}(x,t)) \in L^\infty((0,T), L^2(\mathbb{T}^n ))^N$
	is a  {\bf mild solution} of Equation \eqref{eq:model2} if 
	\begin{equation}\label{eq:mild}
		u_i = e^{D_i\Delta t} u_{i,0} -  \int_0^t e^{D_i\Delta (t-s)} \nabla \cdot \left[h(u_i) \nabla \left( \sum_{j=1}^N \gamma_{ij}K_{ij} \ast u_j\right)  \right] ds 
	\end{equation}
	for each $0<t\leq T$, where $e^{D_i\Delta t} $
	denotes the solution semigroup of the heat equation $u_{it} = D_i \Delta u_i$ on $\mathbb{T}^n $, and $u_{i,0}(x)=u_{i}(x,0)$ is the initial condition. 
\end{definition}  

\begin{theorem}\label{t:local} Assume $u_0\in H^2(\mathbb{T}^n)^N$ and each $K_{ij}$ is twice  differentiable with $\max_{i,j} \|{\nabla}K_{ij}\|_\infty<\infty$. For each $u_0 \in L^2(\mathbb{T}^n )^N$ there exists a time $T>0$ and a unique mild solution of Equation \eqref{eq:model2} with $u\in L^\infty((0,T), L^2(\mathbb{T}^n ))^N.$
	Moreover, $u\in C^1((0,T_*), L^2(\mathbb{T}^n ))^N \cap C^0([0,T_*), H^2(\mathbb{T}^n ))^N.$
	%\begin{align}
	%\label{eq:local2}
	%	u\in C^1((0,T_*), L^2(\mathbb{T}^n ))^N \cap C^0([0,T_*), H^2(\mathbb{T}^n ))^N.
	%\end{align}
\end{theorem}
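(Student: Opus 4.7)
The plan is to prove existence and uniqueness by a Banach fixed-point argument applied to the Duhamel formulation \eqref{eq:mild}, and then bootstrap the regularity using the smoothing of the heat semigroup together with the $H^2$ initial data.

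First I would set up the function space. Fix $T>0$ to be chosen later and work in
\[
X_T=\bigl(L^\infty((0,T); L^2(\mathbb{T}^n))\bigr)^N,\qquad \|u\|_{X_T}=\max_i\sup_{0\le t\le T}\|u_i(t)\|_{L^2},
\]
and in the closed ball $B_R=\{u\in X_T : \|u\|_{X_T}\le R\}$ with $R=2\max_i\|u_{i,0}\|_{L^2}$. Define the map $\Phi=(\Phi_1,\dots,\Phi_N)$ by $\Phi(u)_i$ equal to the right-hand side of \eqref{eq:mild}. The goal is to show $\Phi$ maps $B_R$ into itself and is a strict contraction for $T$ sufficiently small. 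The two ingredients I will use repeatedly are: (i)~the heat semigroup smoothing on the torus, $\|e^{D_i\Delta\tau}\nabla\cdot f\|_{L^2}\le C_i\,\tau^{-1/2}\|f\|_{L^2}$, which produces an integrable singularity at $\tau=0$; and (ii)~the convolution estimate $\|\nabla(K_{ij}\ast u_j)\|_{L^\infty}=\|(\nabla K_{ij})\ast u_j\|_{L^\infty}\le \|\nabla K_{ij}\|_{L^\infty}\,\|u_j\|_{L^1}\le C\,\|u_j\|_{L^2}$, where the last step uses that $\mathbb{T}^n$ has finite measure.

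Combining these with Lemma~\ref{lem:hu1} (in particular $\|h(u_i)\|_{L^2}\le\|u_i\|_{L^2}$), I estimate the integrand of \eqref{eq:mild} in $L^2$ by
\[
\Bigl\|h(u_i)\sum_j\gamma_{ij}\nabla(K_{ij}\ast u_j)\Bigr\|_{L^2}\le C\,\|u_i\|_{L^2}\sum_j\|u_j\|_{L^2}\le C'R^2,
\]
so $\|\Phi(u)_i(t)-e^{D_i\Delta t}u_{i,0}\|_{L^2}\le C''R^2\int_0^t(t-s)^{-1/2}\,ds=2C''R^2\sqrt{t}$. Since $\|e^{D_i\Delta t}u_{i,0}\|_{L^2}\le\|u_{i,0}\|_{L^2}\le R/2$, choosing $T$ so that $2C''R^2\sqrt{T}\le R/2$ yields $\Phi(B_R)\subset B_R$. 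For the contraction, write the difference of the nonlinearities as
\[
h(u_i)\nabla(K_{ij}\ast u_j)-h(v_i)\nabla(K_{ij}\ast v_j)=\bigl[h(u_i)-h(v_i)\bigr]\nabla(K_{ij}\ast u_j)+h(v_i)\nabla(K_{ij}\ast(u_j-v_j)),
\]
and apply the Lipschitz bound $\|h(u_i)-h(v_i)\|_{L^2}\le\|u_i-v_i\|_{L^2}$ from Lemma~\ref{lem:hu1} together with the two estimates above; this gives $\|\Phi(u)-\Phi(v)\|_{X_T}\le C'''\,R\sqrt{T}\,\|u-v\|_{X_T}$, which is a strict contraction for $T$ small. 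The Banach fixed-point theorem then produces a unique mild solution in $B_R$, and a standard continuation argument extends it to a maximal interval $[0,T_*)$.

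Finally, to upgrade regularity under the assumption $u_0\in H^2(\mathbb{T}^n)^N$, I would use the analytic-semigroup smoothing of $e^{D_i\Delta t}$ on the torus. Differentiating the Duhamel formula and applying $(-\Delta)$ in $L^2$, the bounds already obtained show that the nonlinearity $F_i(u)=\nabla\cdot[h(u_i)\sum_j\gamma_{ij}\nabla(K_{ij}\ast u_j)]$ is locally Lipschitz from $H^1$ into $L^2$; combined with $u_0\in H^2$ this places $u$ in $C^0([0,T_*);H^2)$ by the standard semilinear regularity theorem for analytic semigroups, and then direct differentiation of \eqref{eq:mild} in time yields $u\in C^1((0,T_*);L^2)$.

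The main obstacle is the bilinear advective nonlinearity, which is of the form $\nabla\cdot(u\cdot\nabla(K\ast u))$ and therefore loses a derivative; the trick that makes the fixed point close is to leave the divergence inside the semigroup and exploit the $\tau^{-1/2}$ smoothing of $e^{D_i\Delta\tau}\nabla\cdot$, while using the regularity already contained in $\nabla K_{ij}\in L^\infty$ to absorb the gradient falling on the convolution factor. The Lipschitz property of $h$ furnished by Lemma~\ref{lem:hu1} is exactly what is needed so that the cut-off nonlinearity $h(u)$ does not break the contraction estimate.
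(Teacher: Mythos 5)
Your proposal is correct and follows essentially the same route as the paper, which simply defers to the fixed-point argument of \cite[Theorem 3.6]{giuntaetal2022a}, modified by replacing $\|\nabla K\|_\infty$ with $\max_{i,j}\|\nabla K_{ij}\|_\infty$ and invoking the estimates on $h$ from Lemma \ref{lem:hu1}. You have in effect reconstructed that omitted argument in full: the Duhamel/Banach contraction in $L^\infty((0,T);L^2)$ with the $\tau^{-1/2}$ smoothing of $e^{D_i\Delta\tau}\nabla\cdot$, the Young-type convolution bound via $\|\nabla K_{ij}\|_{L^\infty}$, and the Lipschitz property of $h$ are exactly the ingredients the paper relies on.
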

{\bf Proof.} 
{\noindent} In \cite[Theorem 3.6]{giuntaetal2022a} we showed local existence of mild solutions for Equation (\ref{eq:model})  using a fixed point argument. In that case the sensing mechanism for each species was equal $K_{ij}=K$  for all $i,j=1,\dots,N$ where $K$ is twice differentiable. To prove the same result for variable $K_{ij}$ is straightforward. It   requires replacing each $\|\nabla K\|_\infty$ with $\max_{i,j=1,\dots,N} \|{\nabla}K_{ij}\|_\infty$. To prove this for Equation (\ref{eq:model2}) rather than Equation (\ref{eq:model})  requires additionally employing the estimates on $h$ from Lemma \ref{lem:hu1}.  Other than this, the proof remains unchanged from that in \cite[Theorem 3.6]{giuntaetal2022a} so we do not repeat it here.
\qed

\section{Global existence and positivity}\label{sec:global_existence}
\noindent Following the strategy of \cite{giuntaetal2022a}, we define a time $T_\ast$ as follows.  If $\|u\|_{L^1}$ is bounded for all time then let $T_\ast=\infty$.  Otherwise $\|u\|_{L^1}\rightarrow \infty$ as $t \rightarrow T_{\rm max}$ for some time $T_{\rm max} \in (0,\infty)$.  In this case, let $T_*$ be the earliest time such that $\|u\|_{L^1}=2\|u_0\|_{L^1}$. Our aim is to establish existence and positivity up to time $T_*$, then use this to prove that $T_*<\infty$ leads to a contradiction.  This means that $T_\ast=\infty$, establishing global existence of weak solutions.

{\noindent}To show that $T_*=\infty$, we need the following positivity result.
\begin{lemma}\label{l:positivity} Keep all assumptions from Theorem \ref{t:local} and also assume  $u_i(x,0)\geq 0$ in $\mathbb{T}^n$ for all $i=1,\dots,N$. Then $u(x,t)\geq 0$ for solutions of  Equation \eqref{eq:model2}  for $t \in (0,T_*)$. Here we understand $u\geq 0$ a.e. in $\mathbb{T}^n$ component-wise.
\end{lemma}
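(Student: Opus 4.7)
The natural strategy is an energy estimate against the negative part. Fix an index $i$ and define $u_i^-(x,t)=\max(-u_i(x,t),0)\ge 0$, so that $u_i^-(\cdot,0)\equiv 0$ by hypothesis, $u_i^-$ is supported on $\{u_i<0\}$, and $\nabla u_i^-=-\nabla u_i\,\mathbf 1_{\{u_i<0\}}$ in the weak sense (Stampacchia's lemma applies since the regularity in Theorem \ref{t:local} gives $u_i(\cdot,t)\in H^2(\mathbb T^n)$). My goal is to show that $\tfrac{d}{dt}\|u_i^-\|_{L^2}^2\le 0$ on $(0,T_*)$, which forces $u_i^-\equiv 0$ and hence $u_i\ge 0$.

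Testing Equation (\ref{eq:model2}) against $-u_i^-$ and integrating over $\mathbb T^n$ (using that $u_t\in L^2$ and that integration by parts produces no boundary terms on the torus) gives
\begin{equation*}
\tfrac{1}{2}\tfrac{d}{dt}\|u_i^-\|_{L^2}^2 = -D_i\int_{\mathbb T^n}|\nabla u_i^-|^2\,dx + \int_{\mathbb T^n}\nabla u_i^-\cdot h(u_i)\sum_{j=1}^N\gamma_{ij}\nabla(K_{ij}\ast u_j)\,dx.
\end{equation*}
The identity on the left uses $u_i u_{it}=-u_i^-\,\partial_t u_i^-$ on $\{u_i<0\}$ (and both sides are zero on $\{u_i\ge 0\}$). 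The crucial observation is that the advective integrand vanishes pointwise a.e.: on $\{u_i\ge 0\}$ we have $\nabla u_i^-=0$, while on $\{u_i<0\}$ we have $h(u_i)=0$ by the very definition of $h$. Hence that whole term is zero, and we are left with $\tfrac{d}{dt}\|u_i^-\|_{L^2}^2 \le -2D_i\|\nabla u_i^-\|_{L^2}^2\le 0$. Combined with $u_i^-(\cdot,0)=0$, Grönwall yields $u_i^-\equiv 0$ on $(0,T_*)$, which is exactly the desired pointwise a.e.\ non-negativity; because the argument for each $i$ never uses positivity of the other components $u_j$, the whole vector $u$ is non-negative component-wise.

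The main obstacle, and the reason the truncation $h$ was introduced at all, is that the nonlocal advection term by itself has no favourable sign, so one cannot close a positivity estimate directly for Equation (\ref{eq:model}); by contrast, for Equation (\ref{eq:model2}) the factor $h(u_i)$ exactly annihilates the advective contribution on the set where $u_i$ is negative, decoupling the positivity question from the sizes of the convolution kernels, of $\gamma_{ij}$, or of $u_j$ for $j\ne i$. The only technical points are justifying the chain rule for $u_i\mapsto u_i^-$ and the integration by parts at the regularity level provided by Theorem \ref{t:local}; both are standard given $u_i\in C^0([0,T_*),H^2)\cap C^1((0,T_*),L^2)$, and Lemma \ref{lem:hu1} provides the $L^2$-bounds on $h(u_i)$ needed to make the integrals above finite.
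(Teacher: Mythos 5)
Your proof is correct and follows essentially the same strategy as the paper: an $L^2$ energy estimate on the negative part of $u_i$, with the key observation that the truncation $h(u_i)$ vanishes precisely where $u_i<0$, so the advection term contributes nothing and only the sign-definite diffusion term survives. The only (cosmetic) difference is that you integrate by parts over the whole torus using the Stampacchia characterisation of $\nabla u_i^-$, whereas the paper integrates over the set $\{u_i<0\}$ and argues that the boundary terms on its boundary vanish; your variant is, if anything, slightly cleaner.
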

\noindent{\bf Proof.} Suppose $u=(u_1,...,u_N)$ is a solution to Equation (\ref{eq:model2}) and let us fix an index $i\in\{1,\dots,N\}$. We use a standard idea of cutting off the negative part of the solution. Such a method has, for example, been used  in \cite{hipa1} for chemotaxis models. We define the negative part as $u^-_i(x,t):= u_i(x,t)$ if $u_i(x,t)<0$ and $u^-_i(x,t):= 0$ if $u_i(x,t)\geq 0$, 
%\[ u^-_i(x,t):= \left\{\begin{array}{ll} u_i(x,t), & \quad \mbox{if }  u_i(x,t)<0, \\
	%	0, & \quad \mbox{otherwise,} \end{array} \right. \]
and we split the domain $\mathbb{T}^n$ as 
%\begin{align}
%	J_-(t)= \{ x\in \mathbb{T}^n: u_i(x,t) <0\}, \qquad
%	J_{\,0}(t) = \{ x\in \mathbb{T}^n: u_i(x,t) =0\}, \qquad
%	J_+(t)= \{ x\in \mathbb{T}^n: u_i(x,t) >0\}. \nonumber 
%\end{align}
$J_-(t)= \{ x\in \mathbb{T}^n: u_i(x,t) <0\}, J_{\,0}(t) = \{ x\in \mathbb{T}^n: u_i(x,t) =0\},$ and	$J_+(t)= \{ x\in \mathbb{T}^n: u_i(x,t) >0\}.$ 
Since %, by Equation (\ref{eq:local2}), 
the $L^2$-norm of $u_i$ is differentiable in time, we can write 
\begin{align}
	\frac{d}{dt} \frac{1}{2}\|u_i^-(.,t)\|_2^2 &= \int_{J_-(t)} u_i^- u_{it}^- dx 
	+ \underbrace{\int_{J_0(t)} u_i^- u_{it}^- dx}_{=0} 
	+ \underbrace{\int_{J_+(t)} u_i^- u_{it}^- dx}_{=0} = \int_{J_-(t)} u_i^- u_{it}^- dx. \end{align}
Since $J_-(t)$ is an open set and $u_i$ and its weak spatial derivatives are continuous and differentiable in time, we have
%\[ u_{it}^- = u_{it},  \qquad \nabla u_i^- = \nabla u_i, \quad \mbox{on } J_-(t).\]
$u_{it}^- = u_{it} $ and $\nabla u_i^- = \nabla u_i,$ on $J_-(t).$
Then from Equation (\ref{eq:model2}) we obtain 
\begin{align}
	\frac{d}{dt} \frac{1}{2}\|u_i^-(.,t)\|_{L^2}^2 &=
	\int_{J_-(t)} u_i^-\left(D_i \Delta u_i +\nabla \cdot\left(h(u_i) \sum_{j=1}^N \gamma_{ij} \nabla (K_{ij}\ast u_j) \right)\right)dx \nonumber \\
	&= -D_i\int_{J_-(t)} \lvert\nabla u_i^-\rvert^2 dx + \int_{\partial J_-(t)} u_i^- D_i (\nabla u_i \cdot \mathbf{n} ) dS \nonumber - \int_{J_-(t)} (\nabla u_i^-)\cdot\biggl[  h(u_i) \sum_{j=1}^N \gamma_{ij} \nabla (K_{ij}\ast u_j)\biggr] dx \nonumber \\
	& + \int_{\partial J_-(t)} u_i^- h(u_i) \sum_{j=1}^N \gamma_{ij} \bigl(\nabla (K_{ij}\ast u_j)\cdot \mathbf{n}\bigr) dS  
\end{align}
where $dS$ is used to denote the boundary measure on $\partial J_-(t)$ and $\mathbf{n}$ denotes the outward normal vector on $\partial J_-(t)$. On $\partial J_-(t)$, we have $u_i^-=0$, hence both boundary integral terms vanish. The third term on the right hand side also vanishes, since on $J_-(t)$ we have $h(u_i)=0$. Hence we find 
\[
\frac{d}{dt} \frac{1}{2}\|u_i^-(.,t)\|_{L^2}^2
= -2 D_i \left(  \frac{1}{2}\|{\nabla} u_i^-(.,t)\|_{L^2}^2\right){\leq 0 }.   \]
Therefore $\|u_i^-\|_2^2$ is a Lyapunov function and when  $ \| u_i^-(.,0)\|_{L^2}=0$ then $\|u_i^-(.,t)\|_{L^2}^2 = 0$ for all $t >0$.\qed
\begin{theorem} 
	\label{t:global}
	Let $u_0=(u_{10},...,u_{N0})\in H^2(\mathbb{T}^n )^N$ and make the same assumptions as in Lemma \ref{l:positivity}. Then in the solution from Theorem \ref{t:local}, we have $T_*=\infty$.  In other words,
	$u\in C^1((0,\infty), L^2(\mathbb{T}^n ))^N \cap C^0([0,\infty), H^2(\mathbb{T}^n ))^N. $
\end{theorem}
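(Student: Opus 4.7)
The plan is to argue by contradiction, exploiting the mass-conservation structure built into \eqref{eq:model} together with positivity. We may assume $\|u_0\|_{L^1}>0$, since $u_0\equiv 0$ trivially yields the global solution $u\equiv 0$.

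First, I would combine local existence (Theorem \ref{t:local}) with positivity (Lemma \ref{l:positivity}) to conclude that $u_i(x,t)\geq 0$ componentwise on $(0,T_\ast)$. Because $h(u_i)=u_i$ whenever $u_i\geq 0$, the modified system \eqref{eq:model2} coincides with \eqref{eq:model} throughout this interval, so each $u_i$ satisfies a PDE whose right-hand side is a pure divergence.

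Next, I would integrate this divergence-form equation over $\mathbb{T}^n$. Since the torus has no boundary, the divergence theorem yields $\frac{d}{dt}\int_{\mathbb{T}^n}u_i\,dx=0$, hence
\[
\int_{\mathbb{T}^n} u_i(x,t)\,dx=\int_{\mathbb{T}^n} u_{i,0}(x)\,dx,\qquad t\in(0,T_\ast).
\]
Invoking positivity once more, $\|u_i(\cdot,t)\|_{L^1}=\int u_i\,dx$, so $\|u(\cdot,t)\|_{L^1}=\|u_0\|_{L^1}$ is constant on $(0,T_\ast)$. If $T_\ast<\infty$, then by its definition at the start of Section \ref{sec:global_existence} we would have $\|u(\cdot,T_\ast)\|_{L^1}=2\|u_0\|_{L^1}$, contradicting the conservation law we just established. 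Therefore $T_\ast=\infty$, and the regularity assertion $u\in C^1((0,\infty),L^2(\mathbb{T}^n))^N\cap C^0([0,\infty),H^2(\mathbb{T}^n))^N$ follows by applying the ``moreover'' clause of Theorem \ref{t:local} on each finite subinterval $[0,T)$.

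The one step requiring care is verifying that the local mild solution really does persist up to $T_\ast$ in the function spaces of Theorem \ref{t:local}, as opposed to blowing up earlier in some higher norm before the $L^1$ argument can be invoked. On $[0,T_\ast)$ the $L^1$ bound $\|u\|_{L^1}\leq 2\|u_0\|_{L^1}$ is built into the definition of $T_\ast$, and since $\max_{i,j}\|\nabla K_{ij}\|_\infty<\infty$, Young's inequality yields a uniform bound $\|\nabla(K_{ij}\ast u_j)\|_\infty\leq \|\nabla K_{ij}\|_\infty\|u_j\|_{L^1}$. The drift in \eqref{eq:model2} is then bounded in $L^\infty$, so iterating the mild-solution formula \eqref{eq:mild} with these bounds and the heat-semigroup smoothing estimates propagates the $H^2$ regularity up to $T_\ast$, closing the loop. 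This is essentially the continuation argument used in \cite{giuntaetal2022a} and adapted here by replacing single-kernel bounds with $\max_{i,j}\|\nabla K_{ij}\|_\infty$ and using Lemma \ref{lem:hu1} to control the $h(u_i)$ factor.
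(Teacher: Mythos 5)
Your proposal is correct and follows essentially the same route as the paper: positivity from Lemma \ref{l:positivity} turns $\|u_i\|_{L^1}$ into the conserved total mass $\int_{\mathbb{T}^n}u_i\,dx$, which stays constant and therefore can never reach $2\|u_0\|_{L^1}$, forcing $T_\ast=\infty$ by definition. Your closing paragraph on propagating $H^2$ regularity via the Young's-inequality bound $\|\nabla(K_{ij}\ast u_j)\|_\infty\leq\|\nabla K_{ij}\|_\infty\|u_j\|_{L^1}$ makes explicit a continuation step that the paper leaves implicit in its appeal to the framework of the earlier work, but it does not change the substance of the argument.
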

\begin{proof}
	Since $u_i(x,t)\geq 0$ for all $x,t$, we have, for each $t\geq 0$, 
	\begin{align}
		\|u_i\|_{L^1}=\int_{\mathbb T} u_i(x,t)dx.
	\end{align}
	However, the right-hand integral (total population) remains constant over time.  Therefore $\|u_i\|_{L^1}$ is constant over time.  Now recall the definition of  $T_*$, which states that if $\|u_i\|_{L^1}$ is bounded for all $i$ then $T_*=\infty$.
\end{proof}
This establishes global existence of weak positive solutions to Equation (\ref{eq:model2}).  To establish the analogous result for Equation (\ref{eq:model}), we note that any positive solution to Equation (\ref{eq:model2}) is also a positive solution to Equation (\ref{eq:model}), since $h(u_i(x,t))=u_i(x,t)$ whenever $u_i(x,t)\geq 0$.  Hence we have established the following. 
\begin{theorem} 
	\label{t:global2}
	%Let $u_0=(u_{10},...,u_{N0})\in H^2(\mathbb{T}^n )^N$ and make the same assumptions as in Lemma \ref{l:positivity}. 
	The solution $u\in C^1((0,\infty), L^2(\mathbb{T}^n ))^N \cap C^0([0,\infty), H^2(\mathbb{T}^n ))^N$ from Theorem \ref{t:global}, is a positive, global, weak solution to Equation (\ref{eq:model}).  
\end{theorem}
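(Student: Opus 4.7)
The proof is essentially immediate from the preceding results, so the plan is less about overcoming a deep obstacle and more about carefully assembling the pieces.

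The plan is to invoke Theorems \ref{t:local} and \ref{t:global} together with Lemma \ref{l:positivity}. First, I would take the initial datum $u_0 \in H^2(\mathbb{T}^n)^N$ with $u_{i0} \geq 0$. Theorem \ref{t:local} gives a unique mild solution $u$ of the modified system \eqref{eq:model2} with $u \in C^1((0,T_*), L^2)^N \cap C^0([0,T_*), H^2)^N$. Lemma \ref{l:positivity} then gives $u_i(\cdot,t) \geq 0$ a.e.\ in $\mathbb{T}^n$ for all $t \in (0,T_*)$, and Theorem \ref{t:global} upgrades this to $T_* = \infty$ by the $L^1$ conservation argument.

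Next, I would observe that by the definition of the cutoff $h$, we have $h(s) = s$ for every $s \geq 0$. Consequently, at every point $(x,t) \in \mathbb{T}^n \times (0,\infty)$ where $u_i(x,t) \geq 0$ (which, by the previous step, is almost every such point), one has
\begin{equation*}
	h(u_i(x,t)) \sum_{j=1}^N \gamma_{ij} \nabla(K_{ij} \ast u_j)(x,t) = u_i(x,t) \sum_{j=1}^N \gamma_{ij} \nabla(K_{ij} \ast u_j)(x,t).
\end{equation*}
Thus the advective flux in \eqref{eq:model2} coincides with that of \eqref{eq:model} almost everywhere, and the two equations produce identical distributional divergences. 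Because $u$ solves \eqref{eq:model2} in the sense prescribed by the mild formulation \eqref{eq:mild} and enjoys the regularity $C^1((0,\infty), L^2)^N \cap C^0([0,\infty), H^2)^N$, replacing $h(u_i)$ by $u_i$ shows that $u$ satisfies \eqref{eq:model} in the same weak sense, with the same regularity.

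Finally, I would restate positivity (from Lemma \ref{l:positivity}) and the conservation of total mass to record that the obtained $u$ is a positive, global, weak solution of \eqref{eq:model}. There is essentially no analytical obstacle here: the entire content of the theorem is that the modification $u \mapsto h(u)$, introduced purely as a technical device so that Theorem \ref{t:local} and Lemma \ref{l:positivity} can be applied without knowing positivity \emph{a priori}, becomes invisible once positivity has been established. The only point requiring care is to make sure all manipulations are carried out in the function spaces of Theorem \ref{t:global} so that the identification of fluxes is pointwise almost everywhere and hence as elements of $L^2$ in space.
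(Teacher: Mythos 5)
Your proposal is correct and follows essentially the same route as the paper: the paper's own argument is precisely the observation that, once non-negativity is secured via Lemma \ref{l:positivity} and globality via Theorem \ref{t:global}, the identity $h(u_i)=u_i$ on $\{u_i\geq 0\}$ makes the modified flux in Equation \eqref{eq:model2} coincide with that of Equation \eqref{eq:model}, so the solution carries over with the same regularity. Your additional care about where the identification holds (a.e., hence in $L^2$) is a fine elaboration but not a departure from the paper's argument.
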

In particular, in one spatial dimension the solutions are classical and strictly positive, as proved in the following.
\begin{theorem}\label{th:positivity1D}
	%Let $u_0=(u_{10},...,u_{N0})\in H^2(\mathbb{T}^n )^N$ and make the same assumptions as in Lemma \ref{l:positivity}.
	On $1$D domains, the  solution to Equation \eqref{eq:model} given in Theorem \ref{t:global2} is a classical strictly positive solution.
\end{theorem}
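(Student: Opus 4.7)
The plan is to exploit the one-dimensional Sobolev embedding $H^2(\mathbb{T})\hookrightarrow C^{1,1/2}(\mathbb{T})$ to view the nonlocal drift as a smooth, given coefficient, thereby reducing Equation~(\ref{eq:model}) to a linear parabolic equation in each component to which I can apply parabolic Schauder theory and the strong maximum principle. Expanding the divergence, each component satisfies
\begin{equation*}
u_{it}=D_i u_{ixx}+a_i(x,t)\,u_{ix}+b_i(x,t)\,u_i,
\end{equation*}
with $a_i=\sum_j\gamma_{ij}(\partial_x K_{ij})\ast u_j$ and $b_i=\sum_j\gamma_{ij}(\partial_x K_{ij})\ast(\partial_x u_j)$, where I have moved one derivative onto $u_j$ so as to use only the hypothesis $\nabla K_{ij}\in L^\infty$ together with the existing spatial derivative of $u_j$.

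By Theorem~\ref{t:global2}, $u_j\in C^0([0,\infty),H^2(\mathbb{T}))\cap C^1((0,\infty),L^2(\mathbb{T}))$, so the 1D embedding gives $u_j(\cdot,t)\in C^{1,1/2}(\mathbb{T})$ with norm continuous in $t$. Standard convolution estimates (Young's inequality with $\partial_x K_{ij}\in L^\infty$) then yield that $a_i, b_i$ are continuous on $\mathbb{T}\times[0,\infty)$, Hölder continuous in $x$ uniformly in $t$ on each compact cylinder $\mathbb{T}\times[\tau,T]$ with $0<\tau<T<\infty$, and Hölder (in fact Lipschitz) in $t$ uniformly in $x$, using that $u_{jt}\in L^2$ together with convolution with $\partial_x K_{ij}\in L^\infty$. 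I would then invoke interior parabolic Schauder estimates (e.g., Lieberman) for the resulting linear equation to conclude $u_i\in C^{2+\alpha,1+\alpha/2}(\mathbb{T}\times[\tau,T])$ for some $\alpha\in(0,1)$. Since $\tau, T$ are arbitrary, $u_i$ is a classical solution on $\mathbb{T}\times(0,\infty)$, and continuity down to $t=0$ is inherited from $u\in C^0([0,\infty),H^2)$.

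For strict positivity I would apply the strong parabolic maximum principle to the same linear equation. Since $b_i$ need not have a sign, I would first change variables via $v_i=e^{-\lambda t}u_i$ with $\lambda$ chosen so that $\lambda-b_i\geq 0$ on the chosen cylinder, producing a parabolic operator with non-positive zeroth-order term. By Lemma~\ref{l:positivity} we have $u_i\geq 0$; assuming $u_{i,0}\not\equiv 0$ (otherwise uniqueness forces the trivial $u_i\equiv 0$), the strong maximum principle then yields $u_i(x,t)>0$ for all $(x,t)\in\mathbb{T}\times(0,\infty)$. I expect the main obstacle to be verifying the Hölder regularity of $a_i$ and $b_i$ with constants controlled uniformly in $t$ by the conserved $L^1$ mass and the $H^2$-norm of the $u_j$'s; this step relies crucially on the 1D embedding $H^2\hookrightarrow C^{1,1/2}$, which is precisely what fails in higher dimensions.
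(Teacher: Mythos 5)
Your proof is correct and structurally parallel to the paper's: both arguments freeze the nonlocal drift, using the 1D embedding $H^2(\mathbb{T})\hookrightarrow C^1(\mathbb{T})$ to regard each component as a solution of a linear parabolic equation with coefficients determined by the already-known global solution, upgrade to classical regularity, and then invoke a classical positivity theorem for non-negative solutions of linear parabolic equations. The difference lies in the final tool: the paper applies the parabolic Harnack inequality $\sup_x u_i(x,t_1)\le C\inf_x u_i(x,t_2)$ (citing Evans), so that positivity of the supremum --- guaranteed by conservation of the positive initial mass --- forces positivity of the infimum at all later times, whereas you use the strong maximum principle after the standard substitution $v_i=e^{-\lambda t}u_i$ to normalise the zeroth-order coefficient. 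Both routes are valid; Harnack yields a quantitative lower bound while the strong maximum principle gives the qualitative dichotomy, and your explicit treatment of the degenerate case $u_{i0}\equiv 0$ matches the paper's implicit assumption $\lVert u_{i0}\rVert>0$. For the classical-regularity step the paper simply cites a lemma from earlier work rather than running a Schauder bootstrap; if you do run Schauder, note one small repair: the time-Lipschitz bound you claim for $b_i=\sum_j\gamma_{ij}(\partial_xK_{ij})\ast(\partial_x u_j)$ does not follow directly from $u_{jt}\in L^2$ (you would need a time derivative of $\partial_x u_j$). It is cleaner to write $b_i=\sum_j\gamma_{ij}(\partial_{xx}K_{ij})\ast u_j$, using the assumed twice-differentiability of $K_{ij}$, after which $u_{jt}\in L^2$ and Young's inequality suffice.
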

\begin{proof}
	In one spatial dimension we have the Sobolev embedding from $H^2$ to $C^1$. By using the same argument of \cite[Lemma 3.8]{giuntaetal2022a}, we can show that the solution $u$ given in Theorem \ref{t:global2} is such that $u(\cdot, t) \in C^2$. Therefore, in $1$D the solution to Equation \eqref{eq:model} satisfies
	\begin{equation}\label{eq:1dsol}
		u\in C^1((0,\infty), L^2(\mathbb{T}^n ))^N \cap C^0([0,\infty), C^2(\mathbb{T}^n ))^N,
	\end{equation}
	which is therefore a classical solution.
	To prove that this solution is strictly positive in 1D, we consider the following linear parabolic PDE problem
	
	\begin{equation}\label{eq:lin}
		\sigma_{it} = D_i \partial_{xx} \sigma_i +\partial_{x} \left( \sigma_i \sum_{j=1}^N \gamma_{ij} \partial_{x} (K_{ij} \ast u_j) \right), \qquad i=1,\dots,N,
	\end{equation}
	where $u =(u_1, \dots, u_N)$ is the solution to the one dimensional version of Equation \eqref{eq:model} satisfying \eqref{eq:1dsol}.
	Notice that the coefficients of the linear problem in Equation \eqref{eq:lin} are continuous. Let $\sigma =(\sigma_1, \dots, \sigma_N)$ a non-negative (component-wise) classical solution to Equation \eqref{eq:lin}. Then Harnack's inequality for parabolic systems (see \cite[Theorem 10, page 370]{evans2022partial})
	ensures that for each $0<t_1<t_2$ there exists a positive constant $C$ such that
	\begin{equation}\label{eq:hin}
		\sup_{\mathbb{T}^n} \sigma_i(x,t_1) \leq C\inf_{\mathbb{T}^n} \sigma_i (x,t_2), \qquad i=1,\dots,N.
	\end{equation}
	In particular, $u =(u_1, \dots, u_N)$ is a solution to Equation \eqref{eq:hin}, and therefore it satisfies the inequalities in Equation \eqref{eq:hin}, that is
	\begin{equation}\label{eq:hin_u}
		\sup_{\mathbb{T}^n} u_i(x,t_1) \leq C\inf_{\mathbb{T}^n} u_i (x,t_2),  \qquad i=1,\dots,N,
	\end{equation}
	for each $0<t_1<t_2$. Since $u_i \geq 0 $ and  $\lVert u_i\rVert=\lVert u_{i0}\rVert>0$,  it follows that $\sup u_i(x,t_1)>0$,  which implies that $\inf u_i (x,t)>0$ at any positive time $t$. 
	The above Harnack inequality is not available for weak solutions in higher dimensions, hence we prove strict positivity only for the 1D case.
\end{proof}

% \textcolor{hillencolor}{Can we find a case where the corresponding local model has blow-up solutions? and the nonlocal model is bounded? Maybe cite such a result?}

\section{Blow-up of the solutions in the local limit}\label{sec:blowup}
\noindent In this section we show that solution of the local version of Equation (\ref{eq:model}) (i.e., the equation obtained by choosing the kernels $K_{ij}$ equal to the $\delta$-Dirac function) can have finite time blow-up solutions for $n\geq 2$, where $n$ denotes the spatial dimension. To find such solutions we use an argument previously used for chemotaxis models (see \cite{Perthamebook}). Namely, we consider a case where an aggregation arises at a certain location and we orient the torus $\mathbb{T}^n$ in such a way that the `boundary' locations -- i.e. where $x_k=\pm L_k/2$ for some $k$ where $x=(x_1,\dots,x_n)$ -- are far away from this aggregation. Then we consider the second moment of this aggregate and show that for a bounded solution, the second moment becomes negative over time. This contradicts the assumption of the solution to be bounded and hence implies blow-up. We will consider two cases: $\gamma_{ij}<0$, for all $i,j=1, \dots, N$ (mutual attraction and self-attraction); $\gamma_{ii}<0$, $i=1, \dots, N$ (self-attraction) and $\gamma_{ij}>0$, $i \neq j$ (mutual avoidance).

\textbf{Case 1 .} Let $\gamma_{ij}<0$ (mutual attraction), for all $i,j=1, \dots, N$, and consider the PDE
\begin{equation}\label{eq:localmodel}
	u_{it} = D_i \Delta u_i +\nabla\cdot\left( u_i \sum_{j=1}^N \gamma_{ij} \nabla  u_j \right),%\qquad u_i \equiv u_i(x,t), x \in\mathbb{T}^n=[-L_1,L_1]\times\dots\times[-L_n,L_n],\qquad i=1,\dots,N,
\end{equation}
obtained from Equation \eqref{eq:model} with $K_{ij}=\delta$, for all $i,j=1,\dots,N$, where $\delta$ denotes the $\delta$-Dirac distribution. %Let the local system be subject to the following boundary conditions
%\begin{equation}\label{eq:pbc}
%		\begin{aligned}
	%		&	u_i (x_1,\dots,x_N,t)_{\lvert_{x_j=-L_j}}=u_i (x_1,\dots,x_N,t)_{\lvert_{x_j=L_j}}=0,\\
	%		&	\partial_{x_j}u_i (x_1,\dots,x_N,t)_{\lvert_{x_j=-L_j}}=\partial_{x_j}u_i (x_1,\dots,x_N,t)_{\lvert_{x_j=L_j}},
	%		\end{aligned}, \qquad i,j=1,\dots,N.\end{equation}
%\textcolor{hillencolor}
Assume that ${u}(x,t)$ is the solution with initial condition ${u}_0=(u_{10}, \dots, u_{N0})$.  Assume further that, for all $i =1,\dots, N$, $u_{i0}$ decays to zero as $x_k \rightarrow -L_k,L_k$  for any $k=1, \dots, n$. We show that if
\begin{equation}
P:=\sum_{i=1}^N \int_{\mathbb{T}^n}  u_{i0}(x) dx > \frac{\lvert \mathbb{T}^n \rvert}{2}
\end{equation}
and if  
\begin{equation}
\gamma <- \frac{2 P D}{2P-\lvert \mathbb{T}^n \rvert},\quad  \text{ where }\quad \gamma:= \max_{i,j=1, \dots, N} \{\gamma_{ij}\} \quad  D:=\max_{i=1, \dots,N}\{D_i\},
\end{equation}
then the solution $u$ becomes unbounded in finite time.

Indeed, let $u=(u_{1}, \dots, u_{N})$ be a non-negative solution to system \eqref{eq:localmodel} which decays to zero as $x_k \rightarrow -L_k,L_k$ for any $i=1, \dots, N$ and $k=1,\dots,n$. Define the second moment as
\begin{equation}\label{eq:M}
M(t):= \sum_{i=1}^{N}\int_{\mathbb{T}^n} \lvert x \rvert^2 u_i(x,t) dx,
\end{equation}
and compute
\begin{equation}\label{eq:Mt}
\begin{aligned}
	\frac{d}{dt} M & =  \sum_{i=1}^{N}\int_{\mathbb{T}^n} \lvert x \rvert^2 u_{it} dx \\ &=  \sum_{i=1}^{N} D_i\int_{\mathbb{T}^n} \lvert x \rvert^2 \Delta u_{i}dx +  \sum_{i,j=1}^{N} \gamma_{ij} \int_{\mathbb{T}^n} \lvert x \rvert^2 \nabla \cdot (u_i  \nabla u_{j}) dx \\
	& = \sum_{i=1}^{N} D_i \left(\int_{\mathbb{T}^n}  \nabla \cdot ( \lvert x \rvert^2 \nabla u_i) dx -  \int_{\mathbb{T}^n}  \nabla ( \lvert x \rvert^2) \cdot \nabla u_{i} dx\right)dx + \sum_{i,j=1}^{N}  \gamma_{ij}\left(\int_{\mathbb{T}^n} \nabla \cdot ( \lvert x \rvert^2 u_i  \nabla u_{j})_x dx -  \int_{\mathbb{T}^n} \nabla  (\lvert x \rvert^2) \cdot ( u_i  \nabla u_{j}) dx \right)
	\\
	& = -\sum_{i=1}^{N} D_i   \int_{\mathbb{T}^n}  \nabla ( \lvert x \rvert^2) \cdot \nabla u_{i}dx - \sum_{i,j=1}^{N} \gamma_{ij}  \int_{\mathbb{T}^n}  \nabla  (\lvert x \rvert^2) \cdot ( u_i  \nabla u_{j}) dx \\
	& = -2\sum_{i=1}^{N} D_i   \int_{\mathbb{T}^n}  x  \cdot \nabla u_{i}dx - 2\sum_{i,j=1}^{N}\gamma_{ij}  \int_{\mathbb{T}^n}   x \cdot ( u_i  \nabla u_{j}) dx     \\
	& = -2\sum_{i=1}^{N} D_i   \int_{\mathbb{T}^n}  x  \cdot \nabla u_{i}dx - \sum_{i,j=1}^{N} \gamma_{ij} \int_{\mathbb{T}^n}   x \cdot \nabla( u_i   u_{j}) dx     \\
	& = -2\sum_{i=1}^{N} D_i   \int_{\mathbb{T}^n}  \nabla\cdot (x   u_{i})dx +2n\sum_{i=1}^{N} D_i   \int_{\mathbb{T}^n} u_i \,dx  - \sum_{i,j=1}^{N}\gamma_{ij}  \int_{\mathbb{T}^n}   \nabla\cdot ( x  u_i   u_{j}) \,dx + n\sum_{i,j=1}^{N} \gamma_{ij} \int_{\mathbb{T}^n}  u_i   u_{j} \,dx    \\
	& = 2n\sum_{i=1}^{N} D_i   \int_{\mathbb{T}^n} u_i \,dx   +n\sum_{i,j=1}^{N}\gamma_{ij}  \int_{\mathbb{T}^n}  u_i   u_{j} \,dx\\
	& \leq 2n\sum_{i=1}^{N} D_i   \int_{\mathbb{T}^n} u_i \,dx   + \gamma n \sum_{i,j=1}^{N}  \int_{\mathbb{T}^n}  u_i   u_{j} \,dx\\
	& = 2 n \sum_{i=1}^{N} D_i   \int_{\mathbb{T}^n} u_{i} dx + \gamma n  \int_{\mathbb{T}^n} \left(\sum_{i=1}^{N} u_i\right)^2 dx \\
	& \leq 2 D n \sum_{i=1}^{N}  \int_{\mathbb{T}^n} u_{i} dx + \gamma n  \int_{\mathbb{T}^n} \left(\sum_{i=1}^{N} u_i\right)^2   dx\\
	&  \leq 2 D n  \sum_{i=1}^{N}  \int_{\mathbb{T}^n} u_{i} dx + \gamma n \left(2\sum_{i=1}^{N} \int_{\mathbb{T}^n}  u_{i} dx - \lvert \mathbb{T}^n \rvert\right) \\
	&= 2 D n \sum_{i=1}^{N}  \int_{\mathbb{T}^n} u_{i0} dx + \gamma n \left(2\sum_{i=1}^{N} \int_{\mathbb{T}^n}  u_{i0} dx - \lvert \mathbb{T}^n \rvert \right)
\end{aligned} 
\end{equation}

where $\gamma:=\max_{i,j=1, \dots, N} \{\gamma_{ij}\}$ and $D:= \max_{i=1, \dots, N} \{D_i\}$. The third  and seventh equalities are obtained integrating by parts. The third inequality follows from $\gamma<0$ and the Young's inequality $a^2 \geq 2 a-1$.
% \begin{equation}
%     \sum_{i=1}^{N} u_{i} \leq \frac{1}{2}\left(1+\left(\sum_{i=1}^{N} u_{i}\right)^2\right),
% \end{equation}
The last equality follows from conservation of total mass, i.e. $\int_{\mathbb{T}^n}  u_{i}(x,t) dx = \int_{\mathbb{T}^n}  u_{i0}(x) dx, \text{ for all } t\geq 0$, where $u_{i0}$ is the initial condition.
%  \begin{equation}\label{eq:u2}
%      \int_{\mathbb{T}^n} \left(\sum_{i=1}^{N} u_{i}\right)^2 dx \geq 2 \sum_{i=1}^{N} \int_{\mathbb{T}^n}  u_{i} dx - \lvert \mathbb{T}^n \rvert.
%  \end{equation}
%  Since $\gamma<0$, using the inequality \eqref{eq:u2} in Equation \eqref{eq:Mt} gives
%  \begin{equation}\label{eq:Mt2}
%      \begin{aligned}
	%           \frac{d}{dt} M &  \leq 2 D  \sum_{i=1}^{N}  \int_{\mathbb{T}^n} u_{i} dx + \gamma \left(2\sum_{i=1}^{N} \int_{\mathbb{T}^n}  u_{i} dx - \lvert \mathbb{T}^n \rvert\right) = 2 D  \sum_{i=1}^{N}  \int_{\mathbb{T}^n} u_{i0} dx + \gamma \left(2\sum_{i=1}^{N} \int_{\mathbb{T}^n}  u_{i0} dx - \lvert \mathbb{T}^n \rvert\right),
	%      \end{aligned}
%  \end{equation}
% where the second line follows from $\int_{\mathbb{T}^n}  u_{i}(x,t) dx = \int_{\mathbb{T}^n}  u_{i0}(x) dx, \text{ for all } t\geq 0$, where $u_{i0}$ is the initial condition.

By defining $ P = \sum_{i=1}^{N}  \int_{\mathbb{T}^n}  u_{i0}(x) dx $, Equation \eqref{eq:Mt} can be rewritten as
\begin{equation}
\begin{aligned}
	\frac{d}{dt} M &  \leq 2 D n P + \gamma n \left(2 P - \lvert \mathbb{T}^n \rvert\right).
\end{aligned}
\end{equation}
If 
\begin{equation}
P > \frac{\lvert \mathbb{T}^n \rvert}{2} \text{  and } \gamma = - \frac{2 P D+\varepsilon}{2P-\lvert \mathbb{T}^n \rvert},
\end{equation}
for some $\varepsilon>0$, then $\frac{d}{dt} M<-\varepsilon n$. Since the gradient of $M(t)$ is bounded above by a strictly negative constant, 
$-\varepsilon n$, there exists a finite time $T>0$ such that $M(T)=0$ (by Mean Value Theorem). Hence, by Equation \eqref{eq:M} and since $u_i(x,t)$ conserves its total mass, it follows that, for all $i=1, \dots, N$, $u_i$ tends to a finite weighted sum of Dirac delta functions as $t \rightarrow T$.

\textbf{Case 2.} Let $\gamma_{ii}<0$, for $i=1, \dots, N$, and $\gamma_{ij}\geq0$ (mutual avoidance), for $i \neq j$ and $i,j=1, \dots, N$, such that
\begin{equation}
\sum_{\substack{j=1\\j \neq i}}^{N} (\gamma_{ij}+\gamma_{ji})< -\gamma_{ii}, \text{ for all } i=1, \dots, N.
\end{equation}
We show that if
\begin{equation}
P:=\sum_{i=1}^N \int_{\mathbb{T}^n}  u_{i0}(x) dx > N\frac{\lvert \mathbb{T}^n \rvert}{2}
\end{equation}
and if  
\begin{equation}
\gamma <- \frac{4 P D}{2P-N\lvert \mathbb{T}^n \rvert},\quad  \text{ where }\quad \gamma:= \max_{i=1, \dots, N} \{\gamma_{ii}\} \quad  D:=\max_{i=1, \dots,N}\{D_i\},
\end{equation}
then the solution $u$ becomes unbounded in finite time. To this end, as in the previous case, we compute the time-derivative of the second moment of $M(t)$ as follows

\begin{equation}\label{eq:Mt2}
\begin{aligned}
	\frac{d}{dt} M & = 2n\sum_{i=1}^{N} D_i   \int_{\mathbb{T}^n} u_i \,dx   +n\sum_{i,j=1}^{N}\gamma_{ij}  \int_{\mathbb{T}^n}  u_i   u_{j} \,dx \\
	& = 2n\sum_{i=1}^{N} D_i   \int_{\mathbb{T}^n} u_i \,dx   +n\sum_{i=1}^{N}\gamma_{ii}  \int_{\mathbb{T}^n}  u_i^2  \,dx + n\sum_{\substack{i,j=1\\i \neq j}}^n \gamma_{ij} \int_{\mathbb{T}^n}  u_i   u_{j} \,dx
	\\ & \leq 2n\sum_{i=1}^{N} D_i   \int_{\mathbb{T}^n} u_i \,dx   +n\sum_{i=1}^{N}\gamma_{ii}  \int_{\mathbb{T}^n}  u_i^2  \,dx + \frac{n}{2}\sum_{\substack{i,j=1\\i \neq j}}^n (\gamma_{ij}+\gamma_{ji}) \int_{\mathbb{T}^n} u_i^2  \,dx \\
	& \leq 2n\sum_{i=1}^{N} D_i   \int_{\mathbb{T}^n} u_i \,dx   + \frac{n}{2}\sum_{i=1}^{N}\gamma_{ii}  \int_{\mathbb{T}^n}  u_i^2  \,dx   \\
	& \leq 2 D n\sum_{i=1}^{N}    \int_{\mathbb{T}^n} u_i \,dx   + \gamma \frac{n}{2} \sum_{i=1}^{N}  \int_{\mathbb{T}^n}  u_i^2 \,dx\\
	& \leq 2 D n \sum_{i=1}^{N}  \int_{\mathbb{T}^n} u_{i} dx + \gamma \frac{n}{2}\sum_{i=1}^{N}  \int_{\mathbb{T}^n}  (2 u_i-1) dx \\
	& = 2 D n \sum_{i=1}^{N}  \int_{\mathbb{T}^n} u_{i} dx + \gamma n \sum_{i=1}^{N}\int_{\mathbb{T}^n}  u_i dx - \gamma \frac{n}{2} N   \lvert \mathbb{T}^n \rvert\\
	&  = 2 D n \sum_{i=1}^{N}  \int_{\mathbb{T}^n} u_{i0} dx + \gamma n \sum_{i=1}^{N}\int_{\mathbb{T}^n}  u_{i0} dx  - \gamma \frac{n}{2} N   \lvert \mathbb{T}^n \rvert,
\end{aligned} 
\end{equation}
where $\gamma:=\max_{i=1, \dots, N} \{\gamma_{ii}\}$ and $D:= \max_{i=1, \dots, N} \{D_i\}$.
By defining $ P = \sum_{i=1}^{N}  \int_{\mathbb{T}^n}  u_{i0}(x) dx $, Equation \eqref{eq:Mt2} can be rewritten as
\begin{equation}
\begin{aligned}
	\frac{d}{dt} M &  \leq 2 D n P + \gamma n \left(P - \frac{N}{2}\lvert \mathbb{T}^n \rvert\right).
\end{aligned}
\end{equation}
If 
\begin{equation}
P > N\frac{\lvert \mathbb{T}^n \rvert}{2} \text{  and } \gamma = - \frac{4 P D+\varepsilon}{2P-N\lvert \mathbb{T}^n \rvert},
\end{equation}
for some $\varepsilon>0$, then $\frac{d}{dt} M<-\varepsilon n/2$. Then, by the same argument as {\bf Case 1}, $u_i$ tends to a finite weighted sum of Dirac delta functions as $t \rightarrow T$, for all $i=1, \dots, N$.
%Therefore, $M(t)$ is a monotonic decreasing function and there exists a finite time $T>0$ such that $M(T)=0$. Hence, from the definition of $M(t)$ (Equation \eqref{eq:M}) it follows that $u_i(x,T)=\delta(x)$, for some $i=1, \dots, N$.

The above calculations do not give a complete categorisation of blow-up regimes, but do demonstrate the singular nature of non-linear and local cross-diffusion terms. As with chemotaxis models \cite{Horstmann,schmeiser,UsersGuide,bellomo2015}, such a categorisation requires advanced machinery such as energy estimates or multiscale arguments, which are beyond the scope of this work.  We leave the general question of blow-up in  Equation (\ref{eq:localmodel}) as an interesting open problem.

\section{Numerical simulations}\label{eq:num}

\noindent %In the previous sections we have proved that the nonlocal terms in Equations \eqref{eq:model} guarantee the global existence of non-negative solutions. 
The aim of this section is to demonstrate numerically how the existence of solutions breaks down in the spatially-local limit of Equation \eqref{eq:model} for a few choice examples. For our numerical solutions, we use a spectral method and the numerical scheme described in \cite{giuntaetal2022a}. We also use the following averaging kernel
\begin{equation}
K_{ij}(x,y)=\begin{cases}
	\frac{\pi}{r_{ij}^2(\pi^2-4)}\left( 1+\cos\left( \frac{\pi}{r_{ij}}\sqrt{x^2+y^2} \right) \right), & \text{ if } x^2+y^2 \leq r_{ij}^2, \\
	0, & \text{ otherwise,}
\end{cases}
\end{equation}
which satisfies the assumptions of Theorem \ref{t:local}. %We considered numerous scenarios and performed several simulations by reducing the value of the detection radius $r_{ij}$ of one, or more, of the species $u_i$. %Here we show the results of some of the scenarios studied. 

Figure \ref{fig:attract} shows three sets of numerical simulations obtained by fixing $N=1$ population (in (a) and (b)), $N=2$ populations (in (c) and (d)) and $N=3$ populations (in (e) and (f)), with $\gamma_{ij}<0$ (mutual attraction) and $r_{ij}=r_{ji}$, for all $i,j$. The simulated populations become steeper as the detection radius, $r_{ij}$, decreases.  This is suggestive of blow-up as $r_{ij}$ vanishes, even though solutions remain bounded for all strictly positive $r_{ij}$. Note that in the limit $r_{ij}\rightarrow 0$ for all $i$ and $j$, this example reduces to \textbf{Case 1} analysed in the previous section.  There, we show that the local system may undergo a finite time blow-up for large enough initial data. 
The appearance of spike solutions becomes more pronounced with the addition of populations. In fact, for fixed values of $r_{ij}$, the addition of populations makes the solution higher and steeper, due to the fact that, in addition to self-attraction, the populations also exhibit mutual attraction (compare panels a,c, and e).

Figure \ref{fig:avoid} shows a similar analysis to Figure \ref{fig:attract}, but this time focusing on the situation relevant to \textbf{Case 2}, i.e. populations exhibiting mutual avoidance and a sufficiently strong self-attraction. As with Figure \ref{fig:attract}, we observe that as $r_{ij}$ decreases, the population profiles become steeper, suggesting blow-up as $r_{ij}$ tends to zero.
\begin{figure}[h] 
\centering
\textbf{Case 1.} $\gamma_{ij}<0$: Self attraction and Mutual attraction\par\medskip
\subfloat[$r_{11}=0.3$]
{ \includegraphics[width=0.16\textwidth]{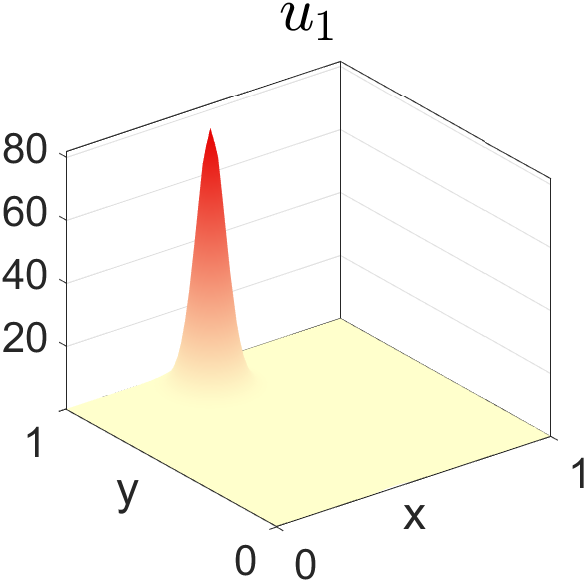}}
\hspace{0.25cm}
\subfloat[$r_{11}=0.2$]
{\includegraphics[width=0.16\textwidth]{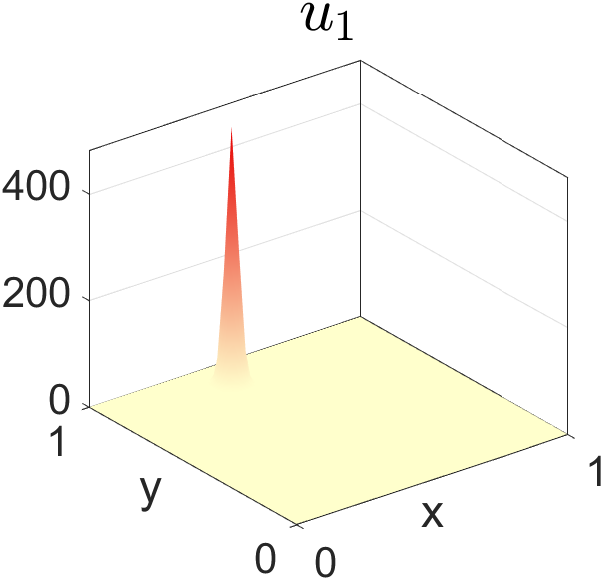}}\\
%  \subfloat[$r_{ij}=0.4$]
%  {\includegraphics[width=0.16\textwidth]{}\hspace{-0.15cm}
	%  \includegraphics[width=0.16\textwidth]{}}\hspace{0.25cm}
\subfloat[$r_{ij}=0.3$]
{ \includegraphics[width=0.16\textwidth]{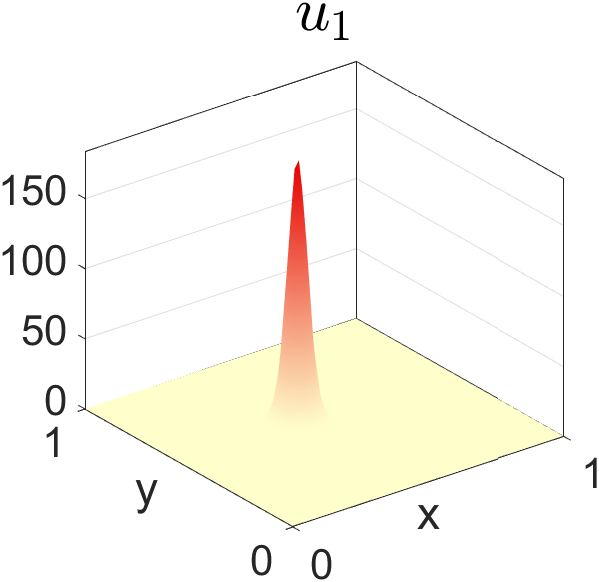}\hspace{-0.15cm}
	\includegraphics[width=0.16\textwidth]{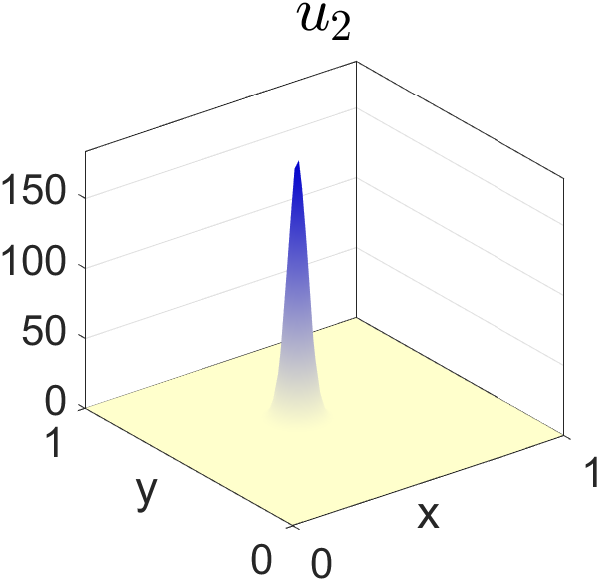}}\hspace{0.25cm}
\subfloat[$r_{ij}=0.2$]
{\includegraphics[width=0.16\textwidth]{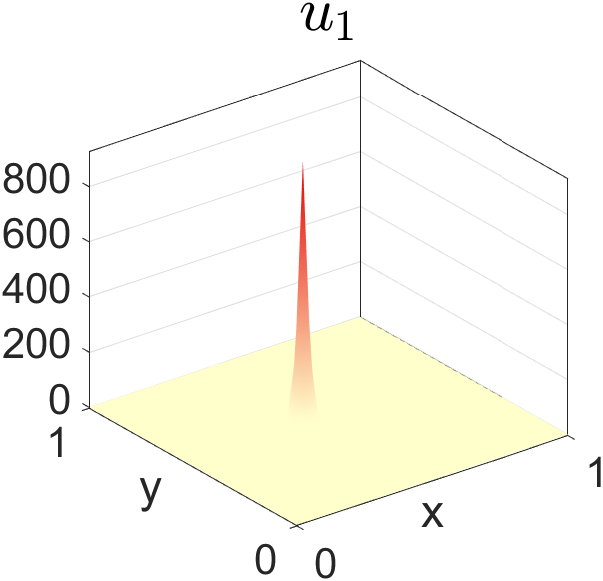}\hspace{-0.15cm}
	\includegraphics[width=0.16\textwidth]{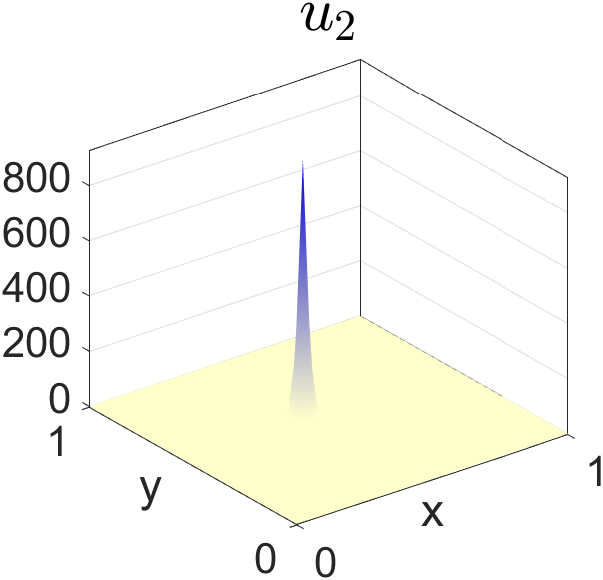}} \\
\subfloat[$r_{ij}=0.3$]
{\includegraphics[width=0.16\textwidth]{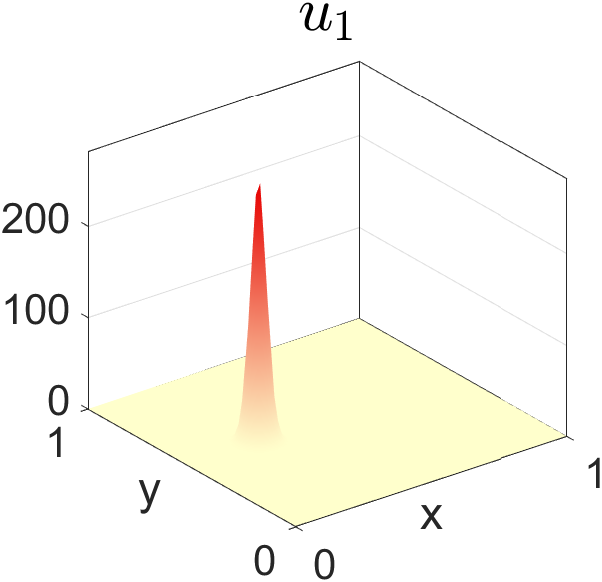}\hspace{-0.15cm}
	\includegraphics[width=0.16\textwidth]{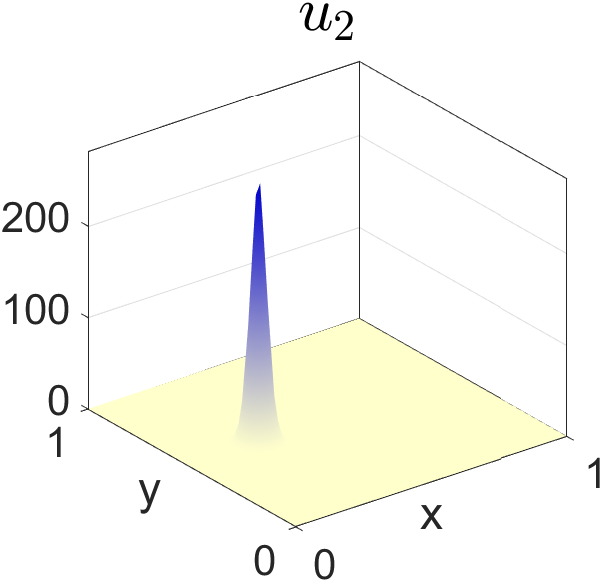}\hspace{-0.15cm}
	\includegraphics[width=0.16\textwidth]{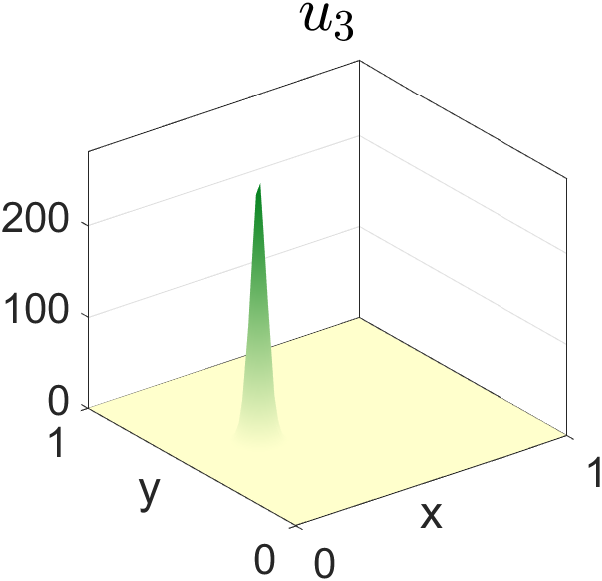}}\hspace{0.3cm}
\subfloat[$r_{ij}=0.2$]
{\includegraphics[width=0.16\textwidth]{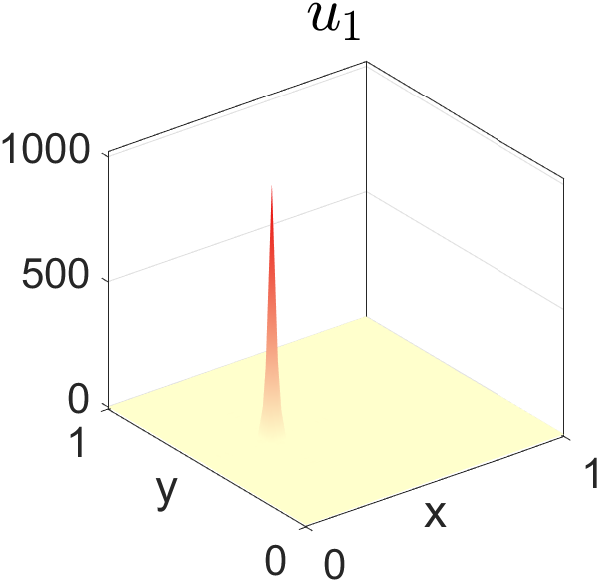}\hspace{-0.15cm}
	\includegraphics[width=0.16\textwidth]{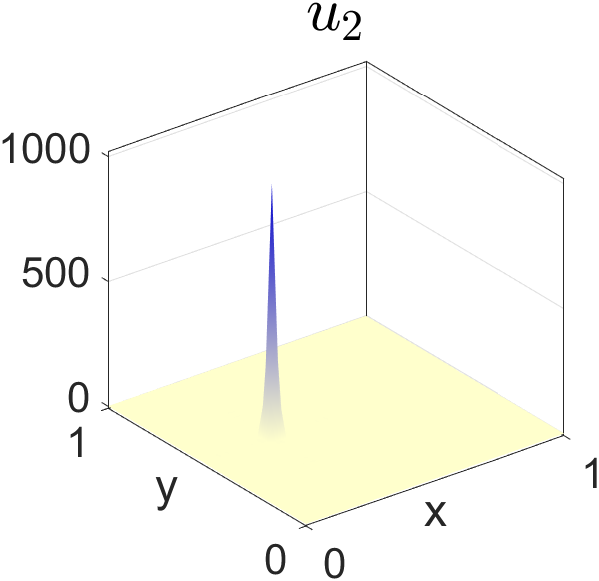}\hspace{-0.15cm}
	\includegraphics[width=0.16\textwidth]{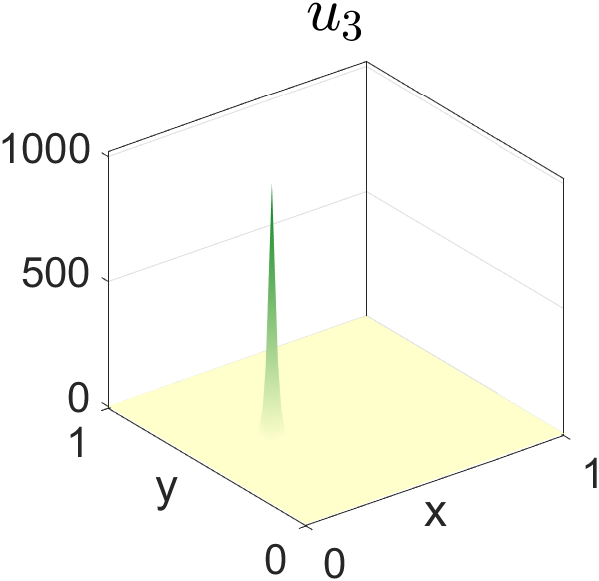}}
\caption{Numerical simulations of Equations \eqref{eq:model} on square domains with $N=1$ in (a)-(b), $N=2$ in (c)-(d), and $N=3$ in (e)-(f), for decreasing values of the sensing ranges $r_{ij}$, with $r_{ij}=r_{ji}$, for $i,j=1,2,3$. The other parameter values are: $D_i=1$, $\gamma_{ij}=\gamma_{ji}=-1 $, for all $i,j=1,2,3$.}
\label{fig:attract}
\end{figure}

\begin{figure}[h] 
\centering
\textbf{Case 2.} $\gamma_{ii}<0$, $\gamma_{ij}>0$ for $i\neq j$: Self attraction and Mutual avoidance\par\medskip
% \subfloat[$r_{ij}=0.4$]
% {\includegraphics[width=0.16\textwidth]{}\hspace{-0.15cm}
	% \includegraphics[width=0.16\textwidth]{}}\hspace{0.25cm}
\subfloat[$r_{ij}=0.3$]
{\includegraphics[width=0.16\textwidth]{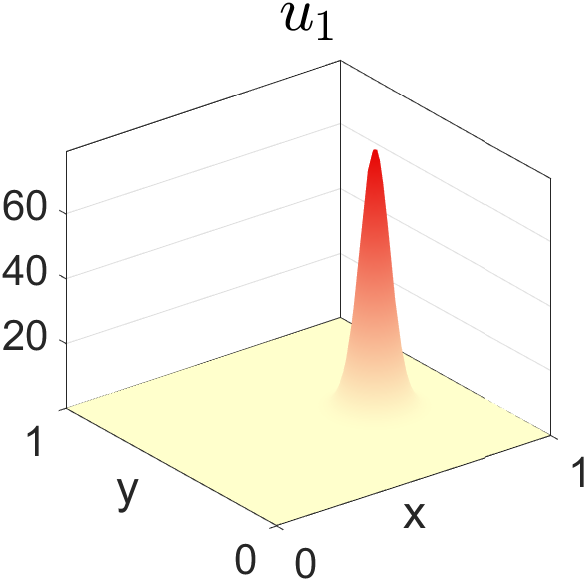}\hspace{-0.15cm}
	\includegraphics[width=0.16\textwidth]{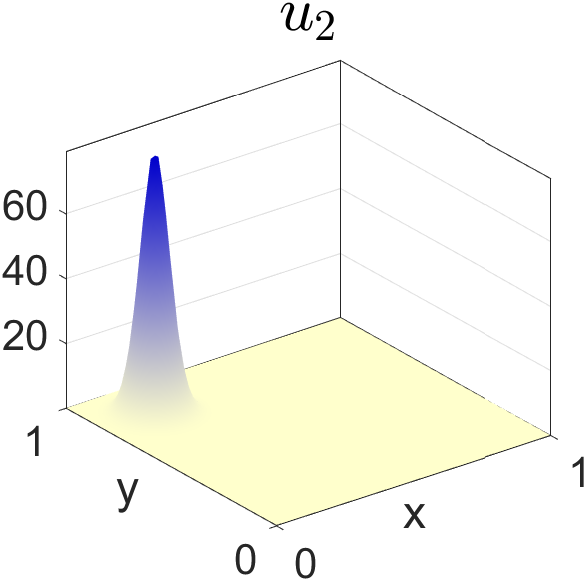}}\hspace{0.25cm}
\subfloat[$r_{ij}=0.2$]
{\includegraphics[width=0.16\textwidth]{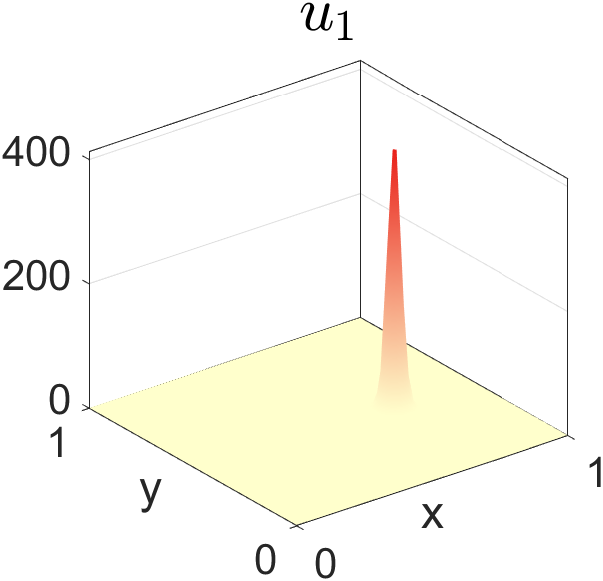}\hspace{-0.15cm}
	\includegraphics[width=0.16\textwidth]{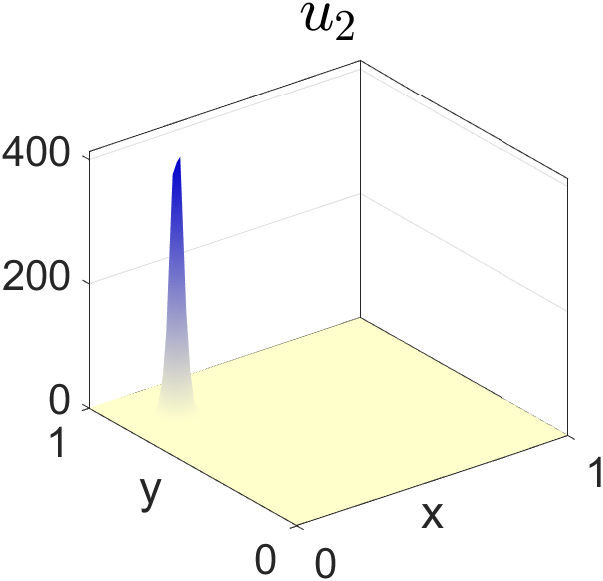}}\hspace{0.25cm}
\caption{Numerical simulations of Equations \eqref{eq:model} on square domains with $N=2$, for decreasing values of the sensing ranges $r_{ij}$, with $r_{ij}=r_{ji}$, for $i,j=1,2$. The other parameter values are: $D_1=D_2=1$, $\gamma_{11}=\gamma_{22}=-5$, $\gamma_{12}=\gamma_{21}=-1 $.}
\label{fig:avoid}
\end{figure}
In addition to these two examples inspired by the \textbf{Case 1} and \textbf{Case 2}, we also % various different scenarios. This numerical investigation revealed that, in addition to \textbf{Case 1} and \textbf{Case 2}, there may be many other scenarios in which the local model can exhibit blow-ups, while the nonlocal counterpart always remains bounded. For example, this can happen in 
examine some cases where we do not currently have blow-up results.  For example, Figure \ref{fig:no-self} shows the case of two populations that attract each other ($\gamma_{12},\gamma_{21}<0$) but do not exhibit self-attraction ($\gamma_{ii}=0$). Likewise, in Figure \ref{fig:3} we consider a mixture of self-avoidance and self-attraction, mutual avoidance and mutual attraction, again observing a peak narrowing as $r_{ij}$ decreases. Note also that, in Figures \ref{fig:no-self} and \ref{fig:3}, only one of the detection radii is reduced, suggesting that solutions of System \eqref{eq:model} may blow-up even in situations where just one of the kernels $K_{ij}$ is the $\delta$-Dirac function. A detailed analysis of various blow-up scenarios is a fruitful direction of future research.

%    \begin{figure}[h!] 
%  \centering
%        \subfloat[$r_{22}=0.3$]
%        {\includegraphics[width=0.16\textwidth]{}\hspace{-0.15cm}
	%        \includegraphics[width=0.16\textwidth]{}}\hspace{0.25cm}
% \subfloat[$r_{22}=0.25$]
%        {\includegraphics[width=0.16\textwidth]{}\hspace{-0.15cm}
	%        \includegraphics[width=0.16\textwidth]{}}\hspace{0.25cm}
%        \subfloat[$r_{22}=0.2$]
%        {\includegraphics[width=0.16\textwidth]{}\hspace{-0.15cm}
	%        \includegraphics[width=0.16\textwidth]{}}\hspace{0.25cm}
% 	\caption{Numerical simulations of Equations \eqref{eq:model} on square domains with $N=2$, for decreasing values of the sensing range $r_{22}$, with $r_{11}=r_{12}=r_{21}=0.4$ fixed. The other parameter values are: $D_i=1$, $\gamma_{ij}=\gamma_{ji}=-1 $, for all $i,j=1,2$.}
% \label{fig:attr2}
%    \end{figure}

\begin{figure}[h] 
\centering
\textbf{Case 3.} $\gamma_{ii}=0$, $\gamma_{ij}<0$ for $i\neq j$: No-Self attraction and Mutual attraction\par\medskip
\subfloat[$r_{2j}=0.3$]
{\includegraphics[width=0.16\textwidth]{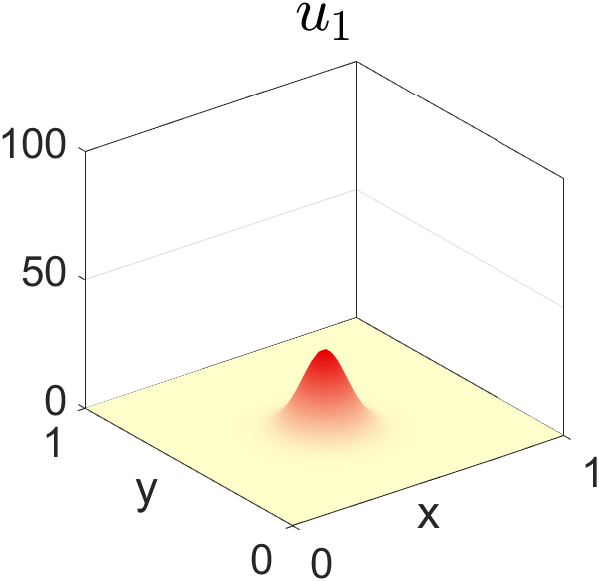}\hspace{-0.15cm}
	\includegraphics[width=0.16\textwidth]{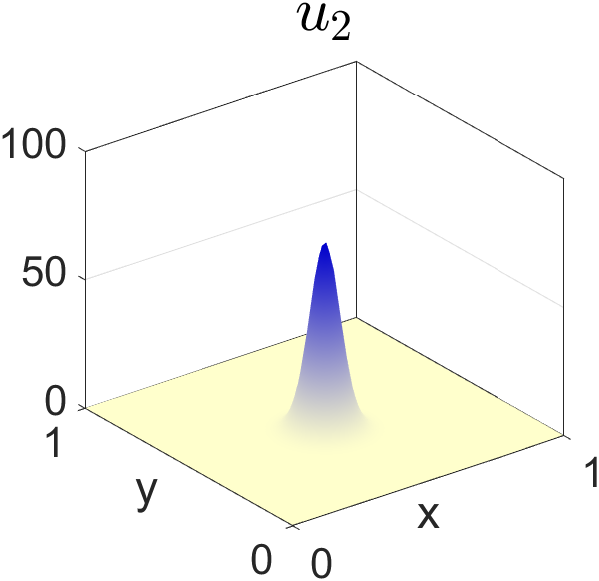}}\hspace{0.25cm}
% \subfloat[$r_{21}=r_{22}=0.2$]
%        {\includegraphics[width=0.16\textwidth]{}\hspace{-0.15cm}
	%        \includegraphics[width=0.16\textwidth]{}}\hspace{0.25cm}
\subfloat[$r_{2j}=0.1$]
{\includegraphics[width=0.16\textwidth]{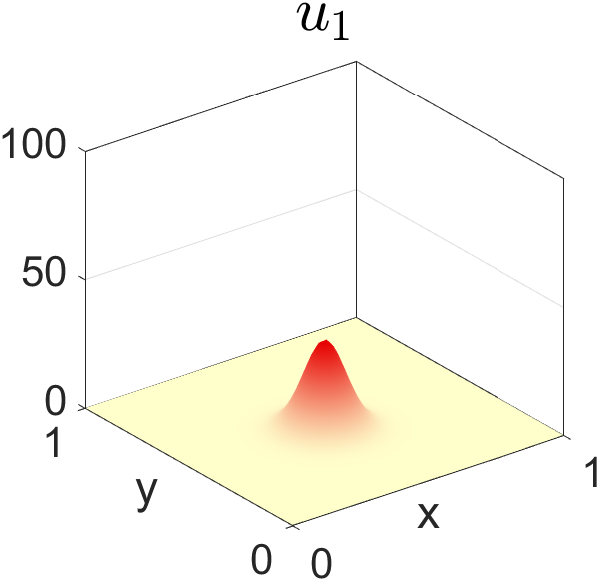}\hspace{-0.15cm}
	\includegraphics[width=0.16\textwidth]{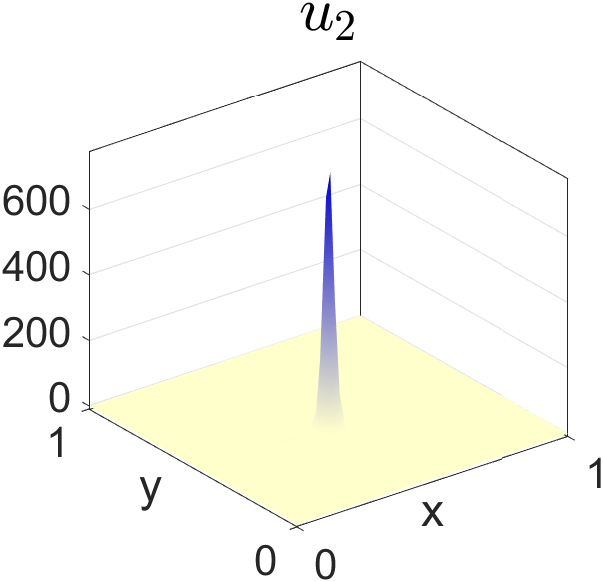}}\hspace{0.25cm}
\caption{Numerical simulations of Equations \eqref{eq:model} on square domains with $N=2$, for decreasing values of the sensing ranges $r_{21}=r_{22}$, with $r_{11}=r_{12}=0.4$ fixed. The other parameter values are: $D_1=D_2=1$, $\gamma_{11}=\gamma_{22}=0$, $\gamma_{12}=\gamma_{21}=-1.2 $.}
\label{fig:no-self}
\end{figure}

\begin{figure}[h] 
\centering
\textbf{Case 4.} Miscellaneous \par\medskip
\subfloat[$r_{3j}=0.3$]
{\includegraphics[width=0.16\textwidth]{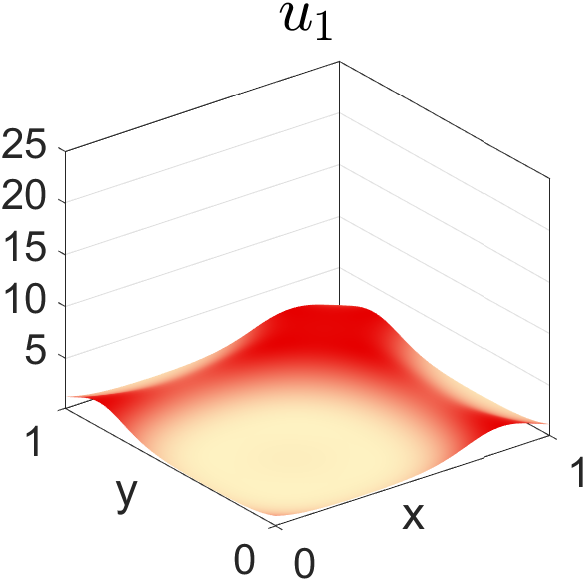}\hspace{-0.15cm}
	\includegraphics[width=0.16\textwidth]{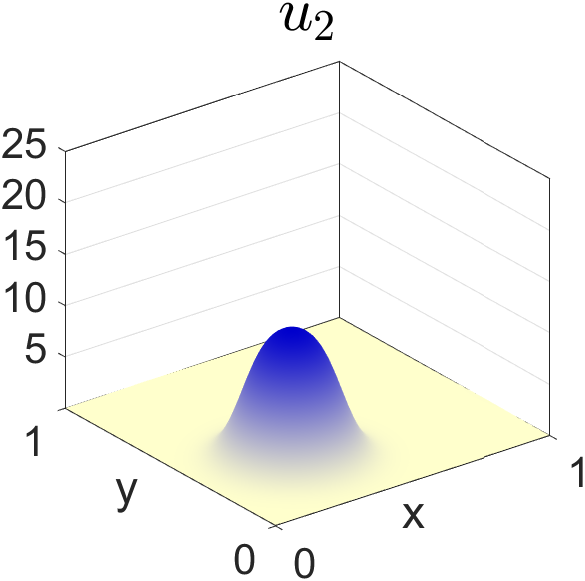}\hspace{-0.15cm}
	\includegraphics[width=0.16\textwidth]{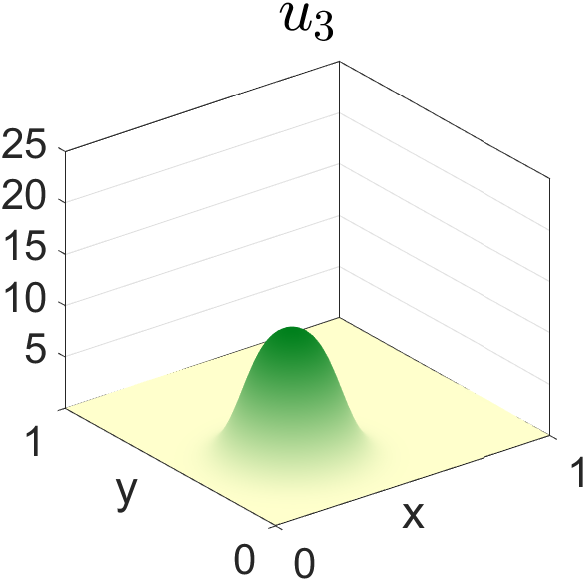}}\hspace{0.3cm}
\subfloat[$r_{3j}=0.1$]
{\includegraphics[width=0.16\textwidth]{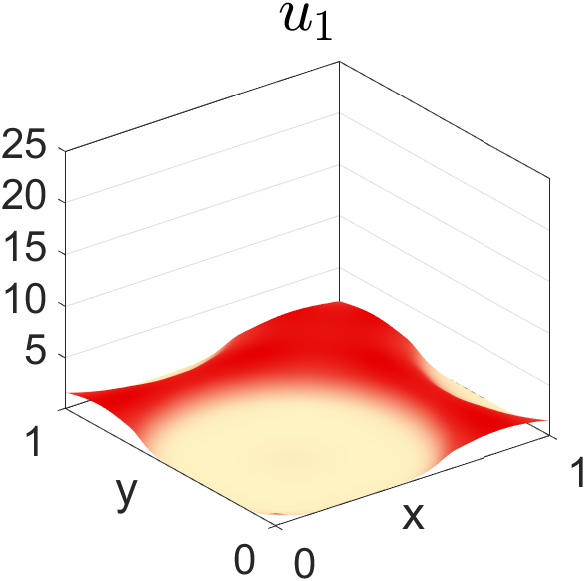}\hspace{-0.15cm}
	\includegraphics[width=0.16\textwidth]{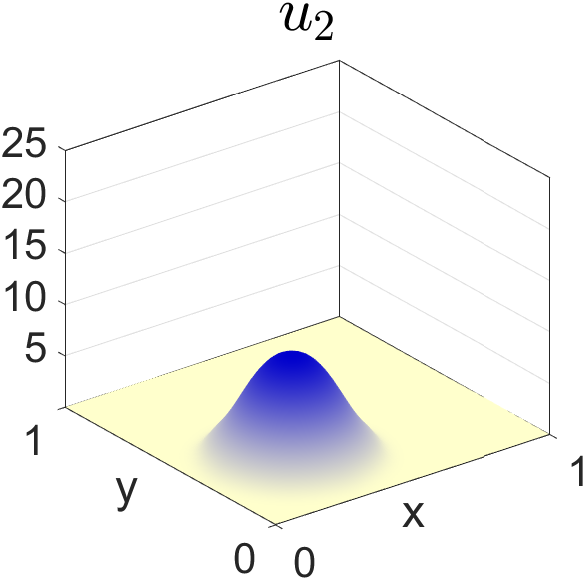}\hspace{-0.15cm}
	\includegraphics[width=0.16\textwidth]{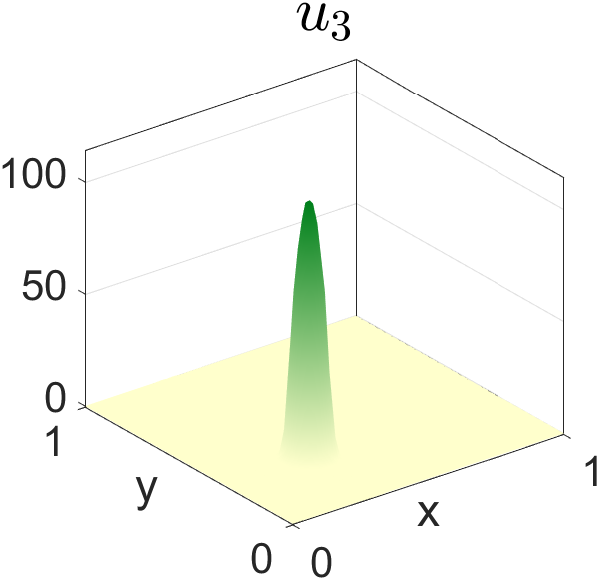}}\hspace{0.3cm}
\caption{Numerical simulations of Equations \eqref{eq:model} on square domains with $N=3$ for decreasing values of the sensing ranges $r_{31}=r_{32}=r_{33}$, while $r_{1j} =0.4$ and $r_{2j} =0.3$,for $j=1,2,3$, are kept fixed. The other parameter values are: $D_1=D_2=D_3=1$, $\gamma_{11}=1$, $\gamma_{22}=-1$, $\gamma_{33}=-1$, $\gamma_{12}=\gamma_{21}=1$, $\gamma_{13}=\gamma_{31}=1$, $\gamma_{23}=\gamma_{32}-1$.}
\label{fig:3}
\end{figure}

% \begin{figure}[h] 
%   \centering
%         \subfloat[$R_2=0.3$]
%         {\includegraphics[width=0.16\textwidth]{}}%\hspace{1cm}
% 	\subfloat[$R_2=0.2$]
%         {\includegraphics[width=0.16\textwidth]{}}%\hspace{1cm}
%         \subfloat[$R_2=0.1$]
%         {\includegraphics[width=0.16\textwidth]{}} \hspace{0.5cm}
%         \subfloat[$R_3=0.3$]
%         {\includegraphics[width=0.16\textwidth]{}}%\hspace{1cm}
% 	\subfloat[$R_3=0.2$]
%         {\includegraphics[width=0.16\textwidth]{}}%\hspace{1cm}
%         \subfloat[$R_3=0.1$]
%         {\includegraphics[width=0.16\textwidth]{}}
% 		\caption{Numerical simulations of Equations \eqref{eq:model} on square domains with $N=2$ in (a)-(c), and $N=3$ in (d)-(f), for decreasing values of the sensig range $R_i$ of the population $u_i$. In (a)-(b): $D_1=D_2=1$, $\gamma_{11}=\gamma_{22}=0 $, $\gamma_{12}=\gamma_{21}=-1.2 $, $R_1=0.4$. In (d)-(f): $D_1=D_2=D_3=1$, $\gamma_{11}=\gamma_{12}=\gamma_{13}=1 $, $\gamma_{21}=-\gamma_{22}=-\gamma_{23}=1$, $\gamma_{31}=-\gamma_{32}=-\gamma_{33}=1$, $R_1=0.4$, $R_2=0.3$.}
% 	\label{fig:sim}
%     \end{figure}

\section{Conclusions} 
We have established a comprehensive framework for understanding nonlocal advection-diffusion models of any number of interacting populations, which unifies and extends many previous results on existence of solutions, together with insights into blow-up of singular situations.  We have shown that, under the assumption of sufficiently smooth kernels,  positive solutions exist globally in any spatial dimension. This finding not only generalises existing knowledge, but also reveals a remarkable contrast with local models, where global existence often depends critically on the dimension of the spatial domain \cite{Gilbarg,Lieberman,Suzuki}. 

We also provide strong evidence for the critical role of nonlocal interactions in preventing the blow-up of solutions in finite time. 
Our analysis in Section \ref{sec:blowup} highlights the role of  local cross-diffusion terms and their ability to create sudden singularities in finite time. Similar phenomena have been observed and analysed in chemotaxis models \cite{horstmann2004,UsersGuide,bellomo2015}. While a complete categorisation of blow-up would require sophisticated tools beyond the scope of this paper, the analysis of local limits and numerical simulations provide solid support for the crucial role of nonlocality in preventing blow-up, paving the way for future explorations of the long-term behaviour and applications of these models in a variety of fields.

From a practical perspective, existence and blow-up results can be very useful in informing users of PDE models whether they are sensibly defined.  In particular, when performing numerics, knowledge of existence and blow-up regimes can inform whether those numerics are likely to produce meaningful results {\it a priori}, regardless of the numerical scheme being used. Here, we demonstrate how our insights on existence and blow-up translate to the appearance of spike-like solutions as the detection radius decreases to zero. As the limit is approached, it is necessary to use  ever-higher spatial resolution to capture the behaviour of the PDE accurately. However, away from this limit, solutions are nicely mollified, allowing for more rapid numerical analysis. 

The general global existence result presented here paves our way to a systematic analysis of nonlocal biological interactions. Our main interest is to gain a better understanding of animal space use and oriented animal movement. Possible applications of the model (\ref{eq:model}) are widespread, including  animal home ranges~\cite{briscoe2002home}, space use by territorial competitors \cite{potts2016territorial}, swarming and flocking \cite{eftimie2018hyperbolic}, species reactions to anthropogenic disturbances \cite{mokross2018can}, and biodiversity in heterogeneous environments \cite{tilman2014biodiversity}. The results presented here put us at ease to freely use nonlocal PDE models of type (\ref{eq:model}) to describe complex spatio-temporal interactions arising from such applications.

%\newpage
\vspace{3mm}

\noindent{\bf Acknowledgements:} JRP and VG acknowledge support of Engineering and Physical Sciences Research Council (EPSRC) grant EP/V002988/1 awarded to JRP. VG is also grateful for support from the National Group of Mathematical Physics (GNFM-INdAM).
TH is supported through a discovery grant of the Natural Science and Engineering Research Council of Canada (NSERC), RGPIN-2023-04269. MAL gratefully acknowledges support from NSERC Discovery Grant RGPIN-2018-05210 and from the Gilbert and Betty Kennedy Chair in Mathematical Biology.

\bibliographystyle{abbrv}
\bibliography{Globexist}
\end{document}